\documentclass[12pt,epsfig]{article}

\usepackage{amsmath, amssymb, amsthm}
\usepackage[margin=1.4 in]{geometry}

\usepackage[pdftex]{graphicx}



\numberwithin{equation}{section}
\newtheorem{thm}{Theorem}[section]
\newtheorem{lem}{Lemma}[section]
\newtheorem{rem}{Remark}[section]
\newtheorem*{rem*}{Remark}

%
%



\title 
{The operator-splitting method for Cahn-Hilliard is stable}
\date{}

\author
{Dong Li
\thanks
{Department of Mathematics, the Hong Kong University of Science
\& Technology, Clear Water Bay, Hong Kong.
 Email: {mpdongli@gmail.com}.
 }\qquad
{Chaoyu Quan}	
\thanks{SUSTech International Center for Mathematics, Southern University of Science and Technology,	Shenzhen, China.
Email: {quancy@sustech.edu.cn}.
}
}



\begin{document}
\maketitle
\begin{abstract}
We prove energy stability of a standard operator-splitting method for the Cahn-Hilliard equation. 
We establish uniform bound of Sobolev norms of the numerical solution and convergence
of the splitting approximation.  This is the first unconditional energy stability result for the operator-splitting method for the Cahn-Hilliard equation. Our analysis can be extended to many other models.
\end{abstract}
\section{Introduction}
We consider  numerical solutions of  the Cahn-Hilliard (\cite{CH58})
equation:
\begin{align} \label{1}
\begin{cases}
\partial_t u  = \Delta ( -\nu \Delta u +f (u) ), \quad (t,x) \in (0, \infty) \times \Omega, \\
u \Bigr|_{t=0} =u_0,
\end{cases}
\end{align}
where $u=u(t,x)$ is a real-valued function corresponding to the concentration difference
in a binary system. The parameter  $\nu>0$ is usually called the mobility coefficient which is taken
to a constant here for simplicity. The nonlinear term
$f(u)$ is derived from a standard double well potential, namely:
\begin{align*}
f(u)=u^3- u = F^{\prime}(u), \quad F(u) = \frac 14 (u^2-1)^2.
\end{align*}
Due to this specific choice of equal-well double potential, 
the minima of the potential are located at $u=\pm 1$ which corresponds
different phases or states. The length scale of the transitional region is usual proportional to $\sqrt{\nu}$. In this note we  take the spatial domain $\Omega$ in \eqref{1} as the  two-dimensional $2\pi$-periodic torus $\mathbb T^2=\mathbb R^2/ 2\pi \mathbb Z^2
=[-\pi,\pi]^2$.  Our analysis extends to other physical dimensions $d\le 3$ but we choose the prototypical
case $d=2$ to simplify the presentation. 
For simplicity we consider mean zero initial data, that is 
\begin{align}
\int_{\mathbb T^2} u_0(x) dx =0.
\end{align}
 For smooth solutions, there is  the mass conservation law
\begin{equation}
\frac d {dt} M(t) = \frac d {dt} \int_{\Omega} u(t,x) dx  \equiv 0.
\end{equation}
It follows that $u(t,\cdot)$ has zero mean for all $t>0$.  For the class of mean-zero functions with
suitable regularity,   one can employ the
operator $|\nabla|^s$ for $s<0$ as the Fourier multiplier $|k|^s \cdot 1_{k\ne 0}$. 
The system \eqref{1} naturally arises as a gradient flow of a Ginzburg-Landau type energy
functional $\mathcal E(u)$ in $H^{-1}$, namely
\begin{equation}
\partial_t u= - \frac {\delta \mathcal E} {\delta u } \Bigr|_{H^{-1}} =
\Delta (\frac {\delta \mathcal E} {\delta u} ),
\end{equation}
where $\frac {\delta \mathcal E}{\delta u} \Bigr|_{H^{-1}}$ , $\frac {\delta \mathcal E}
{\delta u}$ denote the standard variational derivatives in $H^{-1}$ and $L^2$ respectively, and
\begin{equation}
\mathcal E(u)= \int_{\Omega} \left( \frac 12 \nu |\nabla u|^2 + F(u) \right) dx
=\int_{\Omega} \left( \frac 12 \nu |\nabla u|^2 + \frac 14 (u^2-1)^2 \right) dx.
\end{equation}
For smooth solutions, the fundamental energy conservation law takes the form
\begin{equation}
\frac d {dt} \mathcal E ( u(t) ) +
\| |\nabla|^{-1}  \partial_t u \|_2^2
=\frac d {dt} \mathcal E(u(t)) + \int_{\Omega}
| \nabla ( -\nu \Delta u + f(u ) ) |^2 dx =0.
\end{equation}
 It follows that 
 \begin{subequations}
\begin{align}
&\mathcal E(u(t) ) \le \mathcal E(u(s)), \qquad \forall\, t\ge s;\\
& \| \nabla u(t) \|_2 \le \sqrt{\frac 2 {\nu}}  \mathcal E(u(t))
 \le \sqrt{\frac 2 {\nu}} \mathcal E(u_0 ), \qquad \forall\, t>0.
\end{align}
\end{subequations}
In particular, one obtains a priori $\dot H^1$-norm control of the solution for all $t>0$. 
Since the scaling-critical space for CH is $L^2$ in 2D, the global wellposedness and regularity 
for $H^1$-initial data follows easily. 

For $\tau>0$, we let $S_L(\tau)= e^{-\tau \nu \Delta^2}$ be the exact solution operator to the linear equation:
\begin{align}
\partial_t  u= - \nu \Delta^2 u.
\end{align}
We define $S_N(\tau): w \to u$ as the solution operator to the following problem:
\begin{align}
\frac{u-w} {\tau } = \Delta ( w^3 - w).
\end{align}
In yet other words,
\begin{align}
u =S_N(\tau) w = w+ \tau \Delta (w^3-w).
\end{align}
This is one of the simplest discretization on the timer interval $[0, \tau]$  for the exact problem 
\begin{align} \label{1.11}
\begin{cases}
\partial_t  u = \Delta ( u^3 -u), \qquad \quad t>0; \\
u \bigr|_{t=0} =w.
\end{cases}
\end{align}
By using the operator-splitting, the solution of the original equation from time $t$ to
time $t+\tau$ is approximated as
\begin{align}
u(t+\tau, x) 
\approx 
\Bigl(  S_L(  \tau ) S_N( \tau)  u \Bigr) (t,x).
\end{align}

The main purpose of this note is establish stability of the above operator-splitting
algorithm. Prior to our work, there were very few rigorous results on the analysis of the operator-splitting
type algorithms for the Cahn-Hilliard equation and similar models.  In \cite{Feng19},
Weng, Zhai and Feng considered a viscous Cahn-Hilliard model of the form
\begin{align}
(1-\alpha) \partial_t u = \Delta ( - \epsilon^2 \Delta u + f(u ) + \alpha \partial_t u),
\end{align}
where $0<\alpha<1$.  They considered a fast explicit Strang splitting and established
stability and convergence under the assumption that $A=\|\nabla u^{\operatorname{num}}\|^2_{\infty}$, $B=
\| u^{\operatorname{num}} \|_{\infty}^2$ are bounded,  and satisfy a technical condition
$6A+8-24B>0$ (see Theorem 1 on pp. 7 of \cite{Feng19}), where $u^{\mathrm{num}}$ denotes
the numerical solution. In \cite{Red19}, Gidey and Reddy considered a convective Cahn-Hilliard
model of the form
\begin{align} \label{1.14}
\partial_t u - \gamma \nabla \cdot \mathbf{h}(u) + \epsilon^2 \Delta^2 u
=\Delta (f(u)),
\end{align}
where $\mathbf{h}(u) =\frac 12 (u^2, u^2)$. They considered operator-splitting
of \eqref{1.14} into hyperbolic part, nonlinear diffusion part and diffusion part respectively,
and obtained various conditional results  concerning certain weak solutions.
In \cite{Tang15}, Cheng, Kurganov, Qu and Tang considered the Strang splitting
for the Cahn-Hilliard equation and molecular beam epitaxy type models.  Some conditional results were given
in \cite{Tang15} but rigorous analysis of energy stability has remained open.
The purpose of this note is to establish a new theoretical framework for the rigorous
analysis of energy stability and higher-order Sobolev-norm stability for the operator-splitting
method applied to these difficult equations.
 Our first result establishes uniform Sobolev control of the numerical solution for
all time.

\begin{thm}\label{thm0}
Let $\nu>0$ and consider the two-dimensional periodic torus $\mathbb T^2
=[-\pi, \pi]^2$.  Assume the initial data $u^0
\in H^{k_0}(\mathbb T^2)$ ($k_0\ge 1$ is an integer) and has mean zero.  Let $\tau>0$ and define
\begin{align}
u^{n+1} = S_L( {\tau}) S_N(\tau )  u^n, \quad n\ge 0.
\end{align}
There exists a constant $\tau_*>0$ depending only on $\|u^0\|_{H^1}$ and
$\nu$, such that if $0<\tau <\tau_*$, then
\begin{align}
\sup_{n\ge 0} \| u^n \|_{H^{k_0}} \le A_1<\infty,
\end{align}
where $A_1>0$ depends on ($\| u^0\|_{H^{k_0} }$, $\nu$, $k_0$). 
\end{thm}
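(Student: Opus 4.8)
The plan is to control the numerical solution through a two-tier argument: first an energy/$\dot H^1$ a priori bound that exploits the gradient-flow structure of the two substeps together with the coercivity of the quartic potential, and then a bootstrap up the Sobolev scale that turns the strong smoothing of $S_L(\tau)=e^{-\tau\nu\Delta^2}$ into a genuine damping of high frequencies. The basic quantitative tool, used throughout, is the family of smoothing/contraction bounds for the biharmonic semigroup in Fourier: for every $a\ge 0$,
\[
\big\| |\nabla|^a e^{-\tau\nu\Delta^2}\big\|_{L^2\to L^2}=\sup_{k\neq 0}|k|^a e^{-\tau\nu|k|^4}\le C_a\,(\tau\nu)^{-a/4},
\]
together with the contraction $e^{-\tau\nu|k|^4}\le 1$ and the dissipation lower bound $1-e^{-2\tau\nu|k|^4}\ge c\min(1,\tau\nu|k|^4)$ produced by one linear step; I would record these, together with the fact that $S_N(\tau)$ is the identity perturbed by $\tau\Delta f$, as preliminary lemmas. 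The key structural remark is that the splitting is exactly the gradient-flow splitting of $\mathcal E=D+P$: $S_L$ is the exact $H^{-1}$-gradient flow of the Dirichlet part $D=\tfrac\nu2\int|\nabla u|^2$, while $S_N$ is one forward-Euler step of the $H^{-1}$-gradient flow of the potential part $P=\int F$.

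For the base case I would work with the full energy $\mathcal E$. Writing $v=S_N(\tau)u^n$ and $u^{n+1}=S_L(\tau)v$, a Taylor expansion of $P$ along the Euler step combined with the exact dissipation of $D$ along $S_L$ gives a one-step inequality of the schematic form
\[
\mathcal E(u^{n+1})\le \mathcal E(u^n)-c\tau\|\nabla f(u^n)\|_2^2-(\text{Dirichlet dissipation})+\mathrm{Err}_n,
\]
where $\mathrm{Err}_n$ collects the forward-Euler remainder $\tfrac{\tau^2}{2}\int F''(\xi)\,|\Delta f(u^n)|^2$ and the mismatch created because $S_L$ moves only the Dirichlet part of the energy. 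The coercive dissipation $\tau\|\nabla f(u^n)\|_2^2$ together with the quartic structure of $F$, the conserved mean-zero constraint, and the two-dimensional embeddings controls the nonlinear growth at the bottom of the scale and in particular yields uniform control of $\|u^n\|_{L^p}$ for every finite $p$.

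The delicate point is $\mathrm{Err}_n$, which nominally contains more derivatives than the dissipation can pay for. The crucial observation is that the dangerous factor appears only \emph{after} the smoothing step, so that in Fourier the offending symbol is $|k|^4 e^{-\tau\nu|k|^4}\le C(\tau\nu)^{-1}$; hence the apparent $\tau^2$ error is in fact $O(\tau)$ and therefore commensurate with the dissipation. Making this precise—carrying the smoothing through the squared-norm computation so that the explicit cubic term is tamed uniformly in $n$ rather than accumulating over the $O(1/\tau)$ steps per unit time—is the main obstacle, and it is exactly where the unconditional (as opposed to merely finite-time) character of the estimate is won. I expect to close the base case by a discrete absorbing-ball argument: for $\tau<\tau_*(\|u^0\|_{H^1},\nu)$ the right-hand side forces $\mathcal E(u^{n+1})\le\max(\mathcal E(u^n),R)$ for a fixed $R=R(\mathcal E(u^0),\nu)$, whence $\sup_n\|u^n\|_{H^1}\le C_1$.

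Finally I would bootstrap to $H^{k_0}$. Fixing $1\le m\le k_0$ and expanding $\|u^{n+1}\|_{\dot H^m}^2=\sum_k|k|^{2m}e^{-2\tau\nu|k|^4}|\widehat{S_N u^n}(k)|^2$ produces a recursion
\[
\|u^{n+1}\|_{\dot H^m}^2\le \|u^n\|_{\dot H^m}^2-D^n_{\mathrm{lin}}+\frac{C\tau}{\nu}\,\|f(u^n)\|_{\dot H^m}^2+(\text{cross term}),
\]
where $D^n_{\mathrm{lin}}\gtrsim\sum_k|k|^{2m}\min(1,\tau\nu|k|^4)|\widehat{u^n}(k)|^2$ and the cross term is absorbed by Young's inequality into the dissipation and the nonlinear error. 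Splitting frequencies at a cutoff $K=K(\nu)$, the low modes are bounded by $K^{2(m-1)}\|u^n\|_{\dot H^1}^2\le K^{2(m-1)}C_1^2$ from the base case, while on the high modes $D^n_{\mathrm{lin}}$ is a strict contraction. The algebra estimate $\|f(u^n)\|_{\dot H^m}\lesssim(1+\|u^n\|_{L^\infty}^2)\|u^n\|_{\dot H^m}$ on $\mathbb T^2$ makes the nonlinear error linear in the top norm, with coefficient governed by $\|u^n\|_{L^\infty}$. Once $m=2$ is reached the embedding $H^2(\mathbb T^2)\hookrightarrow L^\infty$ renders this coefficient uniformly bounded, so the recursion becomes a genuine contraction up to a fixed forcing; a discrete Gronwall/absorbing-ball argument—run first at $m=2$ via a short continuity/induction on $n$ to control $\|u^n\|_{L^\infty}$, then successively for $m=3,\dots,k_0$—yields $\sup_n\|u^n\|_{H^{k_0}}\le A_1$. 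The threshold $\tau_*$ from the base case, shrunk by a factor depending only on $(\nu,k_0)$, suffices throughout.
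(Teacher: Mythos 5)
The base case of your argument has a genuine gap, and it sits exactly where the paper's key idea lives. You run the one-step estimate with the plain energy $\mathcal E=D+P$ and acknowledge two mismatch terms --- the change of $P$ under $S_L$ and the change of $D$ under $S_N$ --- but your claim that these are tamed because the offending symbol is $|k|^4e^{-\tau\nu|k|^4}\le C(\tau\nu)^{-1}$ does not apply to them. The term $P(S_L v)-P(v)=\int f(\theta)\,\bigl(e^{-\tau\nu\Delta^2}-1\bigr)v\,dx$ carries the symbol $e^{-\tau\nu|k|^4}-1$ with \emph{no} compensating exponential in front: with only $H^1$ control it is $O\bigl((\tau\nu)^{1/4}\|v\|_{\dot H^1}\bigr)$ per step, unsigned, and accumulates to $O(\tau^{-3/4})$ over the $O(1/\tau)$ steps per unit time; bounding it by $O(\tau)$ instead costs four derivatives you do not have. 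Likewise $D(S_N u^n)-D(u^n)=-\nu\tau\int\Delta u^n\,\Delta f(u^n)\,dx+O(\tau^2)$ is an unsigned $O(\tau)$ term requiring $\dot H^2$ control that is unavailable at the base level (note $f'(u)=3u^2-1$ changes sign, so this term can \emph{increase} the energy by $\sim\nu\tau\|\Delta u^n\|_2^2$). The forward-Euler remainder $\tfrac{\tau^2}{2}\int F''(\xi)\,|\Delta f(u^n)|^2$ is also evaluated \emph{before} the smoothing step, so the smoothing cannot be ``carried through'' it as you suggest. I do not see how to close the absorbing-ball inequality for $\mathcal E$ from these ingredients.

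The paper avoids all of these mismatches by replacing the Dirichlet part of the energy with the scheme-adapted quadratic form $E_1(w)=\tfrac1{2\tau}\bigl\||\nabla|^{-1}(e^{\tau\nu\Delta^2}-1)^{1/2}w\bigr\|_2^2+\tfrac14\int(w^2-1)^2\,dx$: rewriting one step as $\tfrac{u-w}{\tau}+\tfrac{(e^{\tau\nu\Delta^2}-1)u}{\tau}=\Delta f(w)$ and pairing with $(-\Delta)^{-1}(u-w)$ makes the quadratic part telescope \emph{exactly} (no mismatch terms at all) and, via the elementary spectral inequality $\tfrac{1+e^{\tau\nu|\xi|^4}}{2\tau|\xi|^2}\ge\sqrt{2\nu/\tau}$, produces a damping coefficient $\sqrt{2\nu/\tau}$ on $\|u-w\|_2^2$. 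This absorbs the cubic Taylor remainder $\tfrac32\max\{\|u\|_\infty^2,\|w\|_\infty^2\}\,\|u-w\|_2^2$ once the kernel estimates give $\|u\|_\infty,\|w\|_\infty\lesssim(\nu\tau)^{-1/8}$, i.e. $\|u\|_\infty^2\lesssim(\nu\tau)^{-1/4}\ll\sqrt{\nu/\tau}$ for $\tau<\tau_*$. That mechanism (or an equivalent of it) is what your proposal is missing. Your $H^{k_0}$ bootstrap via a frequency-split recursion is a plausible alternative to the paper's Duhamel-plus-smoothing argument for \eqref{3.28}, but it cannot start without a correct base case, and its $L^\infty$ input should come from the $(\nu\tau)^{-1/8}$ smoothing bound rather than from a continuity argument at the $H^2$ level.
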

\begin{rem}
Similar statements also hold if we consider
$u^{n+1} = S_N(\tau) S_L(\tau) u^n$. Theorem \ref{thm0} is a special case
of Theorem \ref{thm3.2} in Section 3.
\end{rem}

Our second result establishes the convergence of the operator splitting approximation.

\begin{thm}[Convergence of the splitting approximation] \label{thm1}
Assume the initial data $u^0 \in H^8(\mathbb T^2)$ with mean zero. 
Let $u^n$ be defined as in Theorem \ref{thm0}.  Let $u$ be the exact PDE solution
to \eqref{1}  corresponding to initial data $u^0$. Let $0<\tau <\tau_*$ as in Theorem
\ref{thm0}. Then for any $T>0$, we have
\begin{align}
\sup_{n\ge 1, n\tau \le T}  \| u^n - u(n\tau, \cdot ) \|_{L^2(\mathbb T^2)}
\le C \cdot \tau,
\end{align}
where $C>0$ depends on ($\nu$, $\|u^0\|_{H^8}$, $T$).
\end{thm}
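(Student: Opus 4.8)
The plan is to combine a second-order one-step consistency estimate with a stability analysis based on discrete Duhamel summation and parabolic smoothing. Write $t_n=n\tau$, let $u(t)$ be the exact solution, and set $e^n=u^n-u(t_n)$ with $e^0=0$. Denote $g(v)=\Delta(v^3-v)$, so that $S_N(\tau)v=v+\tau g(v)$ and the nonlinear part of \eqref{1} is $\partial_t u=g(u)$. Since $u^0\in H^8$, local wellposedness together with the parabolic smoothing of \eqref{1} gives $u\in C([0,T];H^8)$ with $\sup_{[0,T]}\|u(t)\|_{H^8}=:M<\infty$; this uniform regularity is exactly what the hypothesis $u^0\in H^8$ buys us.

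First I would establish consistency: the one-step defect $\mathcal T^n:=u(t_{n+1})-S_L(\tau)S_N(\tau)u(t_n)$ satisfies $\|\mathcal T^n\|_{L^2}\le C\tau^2$. Rather than Taylor-expand in time (which would force control of $\partial_t^3u$ and hence excessively many spatial derivatives), I would use the Duhamel representation relative to the linear flow, $u(t_{n+1})=S_L(\tau)u(t_n)+\int_0^\tau S_L(\tau-s)g(u(t_n+s))\,ds$, and subtract the numerical map $S_L(\tau)u(t_n)+\tau S_L(\tau)g(u(t_n))$ to obtain $\mathcal T^n=\int_0^\tau\bigl[S_L(\tau-s)g(u(t_n+s))-S_L(\tau)g(u(t_n))\bigr]\,ds$. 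Splitting the integrand as $[S_L(\tau-s)-S_L(\tau)]g(u(t_n+s))+S_L(\tau)[g(u(t_n+s))-g(u(t_n))]$ and using (i) $\|(I-S_L(s))\phi\|_{L^2}\le Cs\|\phi\|_{H^4}$ with $\phi=g(u(t_n+s))\in H^4$ (which needs $u\in H^6$), together with (ii) $\|g(u(t_n+s))-g(u(t_n))\|_{L^2}\le Cs\sup_r\|g'(u)\partial_r u\|_{L^2}$ and the $L^2$-contractivity of $S_L(\tau)$, each bracket is $O(s)$; integration in $s$ yields the claimed $O(\tau^2)$, with constants controlled by $M$ and by Moser estimates for the cubic nonlinearity in $d=2$.

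Next I would set up the error recursion. Writing $(u^n)^3-(u(t_n))^3=q_n\,e^n$ with $q_n=(u^n)^2+u^n u(t_n)+(u(t_n))^2-1$, Theorem \ref{thm0} (with $k_0=8$) and the embedding $H^8\hookrightarrow L^\infty$ in $d=2$ give $\sup_n\|q_n\|_{L^\infty}\le C$. Subtracting the exact and numerical one-step maps produces the linearized recursion $e^{n+1}=S_L(\tau)e^n+\tau S_L(\tau)\Delta(q_n e^n)-\mathcal T^n$, which I would unroll (discrete Duhamel) using $e^0=0$ into $e^n=\tau\sum_{j=0}^{n-1}S_L((n-j)\tau)\Delta(q_j e^j)-\sum_{j=0}^{n-1}S_L((n-1-j)\tau)\mathcal T^j$. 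The consistency sum is harmless: since $S_L$ is an $L^2$-contraction, $\bigl\|\sum_j S_L((n-1-j)\tau)\mathcal T^j\bigr\|_{L^2}\le\sum_j\|\mathcal T^j\|_{L^2}\le n\cdot C\tau^2\le CT\tau$.

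The main obstacle is the nonlinear sum, since the $\Delta$ in $g$ cannot be absorbed by $\|e^j\|_{L^2}$ alone. A naive per-step estimate using $\|S_L(\tau)\Delta\|_{L^2\to L^2}\le C\tau^{-1/2}$ yields a growth factor $(1+C\tau^{1/2})$ per step, which is \emph{not} summable over the $O(\tau^{-1})$ steps. The resolution is to accumulate the smoothing across steps via $\|\Delta S_L(m\tau)\|_{L^2\to L^2}\le C(m\tau)^{-1/2}$, giving $\|e^n\|_{L^2}\le CT\tau+C\tau^{1/2}\sum_{m=1}^n m^{-1/2}\|e^{n-m}\|_{L^2}$, where the kernel $\tau(m\tau)^{-1/2}=\tau\,t_m^{-1/2}$ is a Riemann sum for the integrable singular kernel $(t_n-s)^{-1/2}$. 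Invoking the weakly-singular discrete Gronwall inequality (the discrete analogue of Gronwall for kernels of fractional-integral type) then gives $\sup_{n\tau\le T}\|e^n\|_{L^2}\le C(T,\nu,\|u^0\|_{H^8})\cdot\tau$, which is the assertion. I expect this Gronwall step — turning the per-step $\tau^{-1/2}$ loss into a benign accumulated constant — to be the principal technical point; the consistency estimate, though regularity-intensive, is essentially bookkeeping once the Duhamel representation is used.
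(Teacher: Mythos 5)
Your proposal is correct, but it follows a genuinely different route from the paper. The paper's proof is a reduction: it rewrites the splitting step $u^{n+1}=e^{-\tau\Delta^2}u^n+\tau e^{-\tau\Delta^2}\Delta(f(u^n))$ in the form
\begin{align*}
\frac{u^{n+1}-u^n}{\tau}=-\Delta^2u^{n+1}+\Delta(f(u^n))+g^n,
\end{align*}
i.e.\ as the standard semi-implicit (backward-Euler/explicit-nonlinearity) scheme perturbed by a defect $g^n$ that collects the difference between $e^{-\tau\Delta^2}$ and $(1+\tau\Delta^2)^{-1}$; it then shows $\|g^n\|_2\lesssim\tau^2(\|u^n\|_{H^8}+\|u^n\|_{H^8}^3)$ and invokes the already-established convergence theory for that semi-implicit scheme (Proposition 4.1 of \cite{LQT16}), together with the uniform $H^8$ bound from Theorem \ref{thm3.2}. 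You instead give a direct, self-contained argument: Duhamel-based one-step consistency of order $\tau^2$, a linearized error recursion $e^{n+1}=S_L(\tau)e^n+\tau S_L(\tau)\Delta(q_ne^n)-\mathcal T^n$, discrete Duhamel unrolling, the smoothing bound $\|\Delta S_L(m\tau)\|_{L^2\to L^2}\lesssim(\nu m\tau)^{-1/2}$, and a weakly singular discrete Gronwall inequality. All the individual steps check out: the defect estimate needs only $u\in C([0,T];H^6)$, which $u^0\in H^8$ supplies; $\|q_n\|_\infty$ is controlled by Theorem \ref{thm0} with $k_0=8$ and Sobolev embedding; and the kernel $\tau^{1/2}m^{-1/2}$ is summable to $O(T^{1/2})$, so the Dixon--McKee type Gronwall lemma closes the argument. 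You also correctly identify and avoid the trap of iterating the per-step bound $\|S_L(\tau)\Delta\|_{L^2\to L^2}\lesssim\tau^{-1/2}$. What each approach buys: the paper's is shorter and outsources the singular-kernel bookkeeping to a cited result (essentially an energy-method analysis of the semi-implicit scheme), at the cost of being non-self-contained; yours is longer but transparent and portable to other splittings, at the cost of importing the weakly singular discrete Gronwall lemma, which you should state and cite explicitly if you write this up. Two cosmetic points: your time-Lipschitz bound for $g(u(t))$ should be written as $\|\Delta(f'(u)\partial_tu)\|_{L^2}$ rather than $\|g'(u)\partial_ru\|_{L^2}$ (the Laplacian must land on the product, costing two more derivatives, which your $H^8$ budget covers), and the constants in both the consistency and stability steps depend on $\nu$, which you suppressed after the first paragraph.
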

\begin{rem}
The regularity assumption on initial data can be lowered but we shall not dwell on this
issue here for simplicity of presentation. One can also work out the convergence in
higher Sobolev norms. We shall not pursue this issue here.
\end{rem}

The rest of this note is organized as follows. In Section $2$ we set up the notation and collect
some preliminary lemmas.  In Section $3$ we analyze in detail the propagator $S_L(\tau)
S_N(\tau)$ and prove Theorem \ref{thm3.2}. Theorem \ref{thm0} follows as a special
case of Theorem \ref{thm3.2}. In Section $4$ we give the proof of Theorem \ref{thm1}.

\section{Notation and preliminaries}

For any two positive quantities $X$ and $Y$, we shall write $X\lesssim Y$ or $Y\gtrsim X$ if
$X \le  CY$ for some  constant $C>0$ whose precise value is unimportant.
We shall write $X\sim Y$ if both $X\lesssim Y$ and $Y\lesssim X$ hold.
We write $X\lesssim_{\alpha}Y$ if the
constant $C$ depends on some parameter $\alpha$.
We shall
write $X=O(Y)$ if $|X| \lesssim Y$ and $X=O_{\alpha}(Y)$ if $|X| \lesssim_{\alpha} Y$.

We shall denote $X\ll Y$ if
$X \le c Y$ for some sufficiently small constant $c$. The smallness of the constant $c$ is
usually clear from the context. The notation $X\gg Y$ is similarly defined. Note that
our use of $\ll$ and $\gg$ here is \emph{different} from the usual Vinogradov notation
in number theory or asymptotic analysis.

For any $x=(x_1,\cdots, x_d) \in \mathbb R^d$, we denote $|x| =|x|_2=\sqrt{x_1^2+\cdots+x_d^2}$, and
$|x|_{\infty} =\max_{1\le j \le d} |x_j|$.
Also occasionally we use the Japanese bracket notation:
$\langle x \rangle =(1+|x|^2)^{\frac 12}.$

We denote by $\mathbb T^d=[-\pi, \pi]^d = \mathbb R^d/2\pi \mathbb Z^d$ the usual
$2\pi$-periodic torus.
For $1\le p \le \infty$ and any function $f:\, x\in \mathbb T^d \to \mathbb R$, we denote
the Lebesgue $L^p$-norm of $f$ as
\begin{align*}
\|f \|_{L^p_x(\mathbb T^d)} =\|f\|_{L^p(\mathbb T^d)} =\| f \|_p.
\end{align*}
If $(a_j)_{j \in I}$ is a sequence of complex numbers
and $I$ is the index set, we denote the discrete $l^p$-norm
as
\begin{equation}
\| (a_j) \|_{l_j^p(j\in I)} = \| (a_j) \|_{l^p(I)} =
\begin{cases}
 {\displaystyle \left(\sum_{j\in I} |a_j|^p\right)^{\frac 1p}}, \quad 0<p<\infty, \\
 \sup_{j\in I} |a_j|, \quad \qquad p=\infty.
 \end{cases}
 \end{equation}
 For example,
$ \| \hat f(k) \|_{l_k^2(\mathbb Z^d)} = \left(\sum_{k \in \mathbb Z^d} |\hat f(k)|^2\right)^{\frac 12}$.
If $f=(f_1,\cdots,f_m)$ is a vector-valued function, we denote
$|f| =\sqrt{\sum_{j=1}^m |f_j|^2}$, and
$\| f\|_p = \| ({\sum_{j=1}^m f_j^2})^{\frac 12} \|_p$.
We use similar convention for the corresponding discrete $l^p$ norms for the vector-valued
case.


We use the following convention for the Fourier transform pair:
\begin{equation} \label{eqFt2}
\hat f(k) = \int_{\mathbb T^d} f(x) e^{- i k\cdot x} dx, \quad
 f(x) =\frac 1 {(2\pi)^d}\sum_{k\in \mathbb Z^d} \hat f(k) e^{ ik \cdot x},
\end{equation}
and denote for $0\le s \in \mathbb R$,
\begin{subequations}
\begin{align}
&\|f \|_{\dot H^s} = \|f \|_{\dot H^s(\mathbb T^d)} = \| |\nabla|^s f \|_{L^2(\mathbb T^d)}
\sim \|  |k|^s \hat f (k) \|_{l^2_k (\mathbb Z^d)}, \\
& \| f \|_{H^s} = \sqrt{\| f \|_2^2 + \| f\|_{\dot H^s}^2}  \sim \| \langle
 |k| \rangle^s \hat f(k) \|_{l^2_k(\mathbb Z^d)}.
\end{align}
\end{subequations}

\begin{lem} \label{leKbeta}
Let $d\le 3$ and $\beta>0$. Consider on the torus $\mathbb T^d=[-\pi, \pi]^d$,
\begin{equation}
K(x) = \mathcal F^{-1} ( e^{-\beta |k|^4} )
=e^{-\beta \Delta^2} \delta_0,
\end{equation}
 where $\delta_0$ is the periodic Dirac comb. Then for any $1\le p\le \infty$,
\begin{align}
\| K \|_{L^p(\mathbb T^d)} \le c_{d,p}\,  (1+\beta^{-d(\frac 14 -\frac 1{4p})}),
\end{align}
where $c_{d,p}>0$ depends only on $d$ and $p$.  Define
\begin{align}
\widetilde{K}= \mathcal F^{-1} ( e^{-\beta |k|^4} 1_{k\ne 0} ).
\end{align}
Then
\begin{align}
\| \widetilde{K} \|_{L^p(\mathbb T^d)} \le \tilde c_{d,p}\,  \beta^{-d(\frac 14 -\frac 1{4p})},
\end{align}
where $\tilde c_{d,p}>0$ depends only on $d$ and $p$.
\end{lem}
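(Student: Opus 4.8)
The plan is to transfer the problem from the torus to the whole space $\mathbb{R}^d$ via Poisson summation and then reduce to the two endpoint cases $p=1$ and $p=\infty$ by interpolation. First observe that $K$ and $\widetilde K$ differ only by the $k=0$ Fourier mode, so $K = \widetilde K + (2\pi)^{-d}$; since the constant satisfies $\|(2\pi)^{-d}\|_{L^p(\mathbb T^d)} = (2\pi)^{-d+d/p}\le c_{d,p}$, the bound for $K$ (with the harmless additive $1$) follows at once from the bound for $\widetilde K$. I would therefore concentrate on $\widetilde K$. Moreover, by the elementary interpolation inequality $\|f\|_p\le\|f\|_1^{1/p}\|f\|_\infty^{1-1/p}$, it suffices to prove $\|\widetilde K\|_{L^1(\mathbb T^d)}\lesssim_d 1$ and $\|\widetilde K\|_{L^\infty(\mathbb T^d)}\lesssim_d\beta^{-d/4}$; raising the latter to the power $1-\frac1p$ produces exactly $\beta^{-d(\frac14-\frac1{4p})}$.

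The $L^\infty$ bound is pure Fourier analysis. From $\widetilde K(x) = (2\pi)^{-d}\sum_{k\ne0}e^{-\beta|k|^4}e^{ik\cdot x}$ one gets $\|\widetilde K\|_\infty\le(2\pi)^{-d}\sum_{k\ne0}e^{-\beta|k|^4}$. Using the elementary inequality $|k|^4 = (\sum_j k_j^2)^2\ge\sum_j k_j^4$, the sum factorizes as $\sum_{k}e^{-\beta|k|^4}\le(\sum_{m\in\mathbb Z}e^{-\beta m^4})^d$, and a one-dimensional integral comparison gives $\sum_{m}e^{-\beta m^4}\le 1 + c\beta^{-1/4}$. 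For $0<\beta<1$ this yields $\sum_{k\ne0}e^{-\beta|k|^4}\lesssim_d\beta^{-d/4}$, while for $\beta\ge1$ the bound $e^{-\beta|k|^4}\le e^{-(\beta-1)}e^{-|k|^4}$ valid for $k\ne0$ gives exponential decay $\lesssim_d e^{-\beta}\lesssim_d\beta^{-d/4}$. Hence $\|\widetilde K\|_\infty\lesssim_d\beta^{-d/4}$ uniformly in $\beta>0$.

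The $L^1$ bound is the crux, and here the oscillatory nature of the biharmonic heat kernel matters: since this kernel changes sign, the mass identity $\int_{\mathbb T^d}K\,dx = \hat K(0) = 1$ gives \emph{no} control on $\|K\|_1$. I would pass to the whole-space kernel $G_\beta := \mathcal F^{-1}_{\mathbb R^d}(e^{-\beta|\xi|^4})$, which by scaling satisfies $G_\beta(x) = \beta^{-d/4}G_1(\beta^{-1/4}x)$ with $G_1 = \mathcal F^{-1}_{\mathbb R^d}(e^{-|\xi|^4})\in\mathcal S(\mathbb R^d)$, so that $G_1\in L^1(\mathbb R^d)$ and $\|G_\beta\|_{L^1(\mathbb R^d)} = \|G_1\|_{L^1}=:c_d$. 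Because $e^{-\beta|\xi|^4}$ is Schwartz, Poisson summation is valid and identifies $K$ with the periodization of $G_\beta$, i.e. $K(x) = \sum_{n\in\mathbb Z^d}G_\beta(x+2\pi n)$ (both sides have the same torus Fourier coefficients $e^{-\beta|k|^4}$). The triangle inequality then gives $\|K\|_{L^1(\mathbb T^d)}\le\int_{[-\pi,\pi]^d}\sum_n|G_\beta(x+2\pi n)|\,dx = \|G_\beta\|_{L^1(\mathbb R^d)} = c_d$, whence $\|\widetilde K\|_1\le\|K\|_1 + (2\pi)^{-d}|\mathbb T^d|\le c_d + 1$.

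Combining the two endpoint estimates through the interpolation inequality yields $\|\widetilde K\|_p\lesssim_{d,p}\beta^{-d(\frac14-\frac1{4p})}$ for all $1\le p\le\infty$, and adding back the constant mode gives the stated bound for $K$. The main obstacle is the $L^1$ estimate: it cannot be read off from the Fourier coefficients directly (there is no Hausdorff--Young inequality in that direction) and genuinely requires the whole-space Schwartz representation via Poisson summation together with the sign-indefiniteness remark above. The remaining steps are routine, and nothing in the argument uses $d\le 3$ beyond convergence of the relevant sums and integrals.
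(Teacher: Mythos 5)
Your proposal is correct and follows essentially the same route as the paper: Poisson summation identifies $K$ with the periodization of the whole-space Schwartz kernel $\beta^{-d/4}G_1(\beta^{-1/4}\cdot)$ to get the $L^1$ bound, the $L^\infty$ bound comes from summing the Fourier coefficients $e^{-\beta|k|^4}$ over $k\ne 0$, and interpolation gives the intermediate $p$. The only (harmless) organizational difference is that you reduce $K$ to $\widetilde K$ plus the constant mode and use just the two endpoints, whereas the paper estimates $\|K\|_{L^p}$ directly for every $p$ by splitting the lattice sum into near and far translates and separately records an $L^2$ bound for $\widetilde K$ via Plancherel.
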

\begin{rem}
Define $K_w(x) = (2\pi)^{-d}  \int_{\mathbb R^d}
e^{ i \xi \cdot x} e^{-\beta |\xi|^4} d\xi$. By the usual Poisson summation formula,
it is not difficult to check that
\begin{align} \label{2.8Ta}
K(x)  =\sum_{l \in \mathbb Z^d} K_w( x+ 2\pi l).
\end{align}
This identity will be used below without explicit mentioning. We note that a formal proof 
of \eqref{2.8Ta} may proceed as follows. 
\begin{align}
 & (2\pi)^{-d} \sum_{k \in \mathbb Z^d} e^{-\beta |k|^4} e^{i k \cdot x} \notag \\
 = & (2\pi)^{-d} \sum_{k \in \mathbb Z^d}
 \int_{\mathbb R^d} K_w(y) e^{i k\cdot (x-y) } dy \notag \\
 =& \int_{\mathbb R^d}
 K_{w}(y) \sum_{l\in \mathbb Z^d} \delta(x-y- 2\pi l) d y  =
 \sum_{l\in \mathbb Z^d} K_w (x+ 2\pi l). \notag
 \end{align}
The above formal computation can be justified by the usual limiting process. We omit the 
details.
\end{rem}

\begin{proof}[Proof of Lemma \ref{leKbeta}]

Define
\begin{align*}
K_1(x) = \frac 1 {(2\pi)^d} \int_{\mathbb R^d} e^{i \xi \cdot x} e^{-|\xi|^4} d\xi.
\end{align*}
It is easy to check that $|K_1(x)| \lesssim \, \langle x \rangle^{-10}$ and $K_1 \in
L_x^1(\mathbb R^d)$ for $d\le 3$.
Now note that for $d\le 3$, if $|x|_{\infty} \le \pi$, then $|x| \le \sqrt d \pi
\le \sqrt 3 \pi$. Thus if $|l|\ge 400$, then
\begin{align*}
&\pi  |l| \le |x+ 2\pi l| \le 4\pi |l|, \qquad \forall\,  |x|_{\infty} \le \pi.
\end{align*}
It follows that for all $1\le p\le \infty$ and $|l|\ge 400$,
\begin{align*}
\| \langle \beta^{-\frac 14} (x+ 2\pi l) \rangle^{-10}
\|_{L_x^p(|x|_{\infty} \le \pi)} \lesssim
\langle \beta^{-\frac 14} \pi |l| \rangle^{-10}.
\end{align*}
Clearly then
\begin{align}
\| K\|_{L^p(\mathbb T^d)} &\le \beta^{-\frac d 4} \sum_{l \in \mathbb Z^d}
\| K_1( \beta^{-\frac 14}( x+2\pi l) ) \|_{L_x^{p} (|x|_{\infty} \le \pi )} \notag \\
& \lesssim \;
\beta^{-\frac d4}
\sum_{|l|\le 400} \| K_1 (\beta^{-\frac 14} (x+2\pi l)) \|_{p}
+ \sum_{|l|>400} \beta^{-\frac d4} \langle \beta^{-\frac 14} \pi |l| \rangle^{-10} \notag \\
& \lesssim \; \beta^{-d(\frac 14 - \frac 1 {4p} )} +1.  \label{tpK_e1}
\end{align}
Now we consider the estimate for $\widetilde{K}(x) = K(x)-\frac 1 {(2\pi)^d}$.
Obviously by using the previous bound we have $\|\widetilde{K}\|_1 \lesssim \|K\|_1+1
\lesssim 1$. Alternatively
one can compute
\begin{align*}
\| \tilde  K \|_{L_x^1(\mathbb T^d)} \lesssim  1+
\| \sum_{l \in \mathbb Z^d} \beta^{-\frac d 4} |K_1 (\beta^{-\frac 14} (x+2\pi l) )|
\|_{L_x^1(\mathbb T^d)} \lesssim  1+
\|  K_1 \|_{L_x^1(\mathbb R^d)} \lesssim 1.
\end{align*}
We  bound the $L^2$ norm as
\begin{align*}
\| \widetilde{K}\|_{L_x^2(\mathbb T^d)} \lesssim \| e^{-\beta|k|^4}  \|_{l_k^2(
0\ne k \in \mathbb Z^d)}
\lesssim\beta^{-\frac d8}.
\end{align*}
Similarly,
\begin{align*}
\| \widetilde{K}\|_{L_x^{\infty} (\mathbb T^d)} &\lesssim \| e^{-\beta |k|^4}  \|_{l_k^1(
0\ne k \in \mathbb Z^d)}
\lesssim \beta^{-\frac d4}.
\end{align*}
By using interpolation we then get the $L^p$ estimate.
\end{proof}

\begin{lem} \label{lem2.2}
Let  $d=2$ and $\nu>0$. Let $\tau>0$. Then for any $g\in L^4(\mathbb T^2)$
 with zero mean,
we have
\begin{align}
&\| e^{-\nu \tau \Delta^2}  g \|_{\infty} \le C_1 (\nu \tau)^{-\frac 18}
\|g\|_4;
\end{align}
For any $g_1\in L^{\frac 43}(\mathbb T^2)$, we have
\begin{align}
& \| \tau \Delta e^{-\nu \tau \Delta^2} g_1\|_{\infty}
\le C_2  \tau (\nu \tau)^{-\frac 78} \| g_1\|_{\frac 43}.
\end{align}
In the above $C_1>0$, $C_2>0$ are absolute constants.
\end{lem}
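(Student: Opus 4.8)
\emph{Plan.} Both estimates are instances of Young's convolution inequality, and the whole game is to read off the correct $L^p$-norm of the convolution kernel of the bi-Laplacian semigroup from Lemma \ref{leKbeta} and a close variant of it. Throughout I set $\beta=\nu\tau$ and write $K$, $\widetilde K$ for the kernels in Lemma \ref{leKbeta} with this choice of $\beta$.

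First I would handle the estimate on $\|e^{-\nu\tau\Delta^2}g\|_\infty$. Since $g$ has mean zero, $\hat g(0)=0$, so the zero Fourier mode is irrelevant and $e^{-\nu\tau\Delta^2}g=\widetilde K * g$ (indeed $\widehat{\widetilde K * g}(k)=e^{-\beta|k|^4}1_{k\ne0}\hat g(k)=e^{-\beta|k|^4}\hat g(k)$ for every $k$). Applying Young with the conjugate pair $(\tfrac43,4)$ gives $\|\widetilde K*g\|_\infty\le\|\widetilde K\|_{4/3}\,\|g\|_4$. It then remains to invoke Lemma \ref{leKbeta} with $d=2$ and $p=\tfrac43$, whose exponent is $-d\bigl(\tfrac14-\tfrac1{4p}\bigr)=-2\bigl(\tfrac14-\tfrac3{16}\bigr)=-\tfrac18$, so that $\|\widetilde K\|_{4/3}\lesssim\beta^{-1/8}=(\nu\tau)^{-1/8}$. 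This is precisely the first claim, and it explains why mean-zero is assumed: it lets us use $\widetilde K$ rather than $K$, killing the additive $+1$ in the bound for $\|K\|_p$.

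For the second estimate, the operator $\tau\Delta e^{-\nu\tau\Delta^2}$ is convolution with the kernel $H:=\mathcal F^{-1}\bigl(-\tau|k|^2 e^{-\beta|k|^4}1_{k\ne0}\bigr)=\tau\Delta\widetilde K$; here no mean-zero hypothesis is needed because the multiplier $-\tau|k|^2 e^{-\beta|k|^4}$ already vanishes at $k=0$. Young with the pair $(4,\tfrac43)$ gives $\|H*g_1\|_\infty\le\|H\|_4\,\|g_1\|_{4/3}$, so everything reduces to showing $\|H\|_4\lesssim\tau\beta^{-7/8}$. To get this I would repeat, essentially verbatim, the Poisson-summation-and-scaling argument from the proof of Lemma \ref{leKbeta}, but with the whole-space profile $K_1(x)=(2\pi)^{-d}\int_{\mathbb R^d}e^{i\xi\cdot x}e^{-|\xi|^4}d\xi$ replaced by $\Delta K_1$. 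The point is that $\Delta K_1$ is again Schwartz (its Fourier transform $-|\xi|^2e^{-|\xi|^4}$ is Schwartz), hence satisfies $|\Delta K_1(y)|\lesssim\langle y\rangle^{-10}$ and lies in every $L^p(\mathbb R^d)$, so the identical chain of inequalities applies. The only new feature is a scaling bookkeeping: writing the $\beta$-profile as $K_w(x)=\beta^{-d/4}K_1(\beta^{-1/4}x)$, the two derivatives in $\Delta$ each contribute a factor $\beta^{-1/4}$, so $\Delta K_w(x)=\beta^{-d/4-1/2}(\Delta K_1)(\beta^{-1/4}x)$ and hence $H$ carries the extra scalar $\tau\beta^{-1/2}$ relative to $\widetilde K$. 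Running the summation over the lattice as before then yields $\|H\|_p\lesssim\tau\,\beta^{-1/2-d(1/4-1/(4p))}$, and at $d=2$, $p=4$ the exponent is $-\tfrac12-2\bigl(\tfrac14-\tfrac1{16}\bigr)=-\tfrac78$, giving $\|H\|_4\lesssim\tau(\nu\tau)^{-7/8}$ and the second claim.

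I expect the only genuine technical point to be this last kernel estimate, namely confirming that the insertion of $\tau\Delta$ merely rescales the Schwartz profile and contributes the clean factor $\tau\beta^{-1/2}$, so that the tail sum over $|l|>400$ and the central block $|l|\le400$ are controlled exactly as in Lemma \ref{leKbeta}. Once $\|H\|_4\lesssim\tau\beta^{-7/8}$ is established, the rest is bookkeeping with Young's inequality and conjugate exponents.
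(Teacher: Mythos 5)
Your treatment of the first inequality is exactly the paper's: use the mean-zero hypothesis to write $e^{-\nu\tau\Delta^2}g=\widetilde K*g$, apply Young with the conjugate pair $(\tfrac43,4)$, and read off $\|\widetilde K\|_{4/3}\lesssim(\nu\tau)^{-1/8}$ from Lemma \ref{leKbeta}. For the second inequality your route is genuinely different in the one step that matters, namely the $L^4$ bound on the kernel. The paper stays on the Fourier side: it sets $K_\beta=\mathcal F^{-1}\bigl(\beta^{1/2}|k|^2e^{-\beta|k|^4}\bigr)$ and gets $\|K_\beta\|_{L^4(\mathbb T^2)}\lesssim\|\widehat{K_\beta}\|_{l^{4/3}(\mathbb Z^2)}\lesssim\beta^{-3/8}$ by Hausdorff--Young plus a lattice-sum scaling computation, then concludes by Young's inequality exactly as you do. You instead rerun the Poisson-summation argument of Lemma \ref{leKbeta} in physical space with the profile $\Delta K_1$, extracting the scalar $\tau\beta^{-1/2}$ from $\Delta K_w(x)=\beta^{-d/4-1/2}(\Delta K_1)(\beta^{-1/4}x)$. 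Both are legitimate; the Hausdorff--Young route is shorter (no tail sum, no pointwise decay of $\Delta K_1$ needed), while yours is more elementary and would also deliver endpoint bounds such as $L^1$ where Hausdorff--Young gives nothing.

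One concrete caveat. If the tail over $|l|>400$ and the central block are controlled ``exactly as in Lemma \ref{leKbeta}'', what you actually obtain is
\begin{align*}
\|H\|_{L^p(\mathbb T^d)}\lesssim \tau\beta^{-\frac12}\Bigl(1+\beta^{-d(\frac14-\frac1{4p})}\Bigr),
\end{align*}
because the tail sum contributes an additive $O(1)$ that is not removed here: the cancellation $\int_{\mathbb R^d}\Delta K_1\,dy=0$ is what would kill it, but you take absolute values, and there is no mean-zero reduction available as there was for $\widetilde K$. Hence your argument as written yields $\|H\|_4\lesssim\tau(\nu\tau)^{-7/8}$ only in the regime $\nu\tau\lesssim1$; for $\nu\tau\ge1$ it only gives $\tau(\nu\tau)^{-1/2}$, which is weaker than the claim. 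This is easily patched --- for $\nu\tau\ge 1$ the crude bound $\|\tau\Delta e^{-\nu\tau\Delta^2}g_1\|_\infty\le C\tau\sum_{k\ne0}|k|^2e^{-\nu\tau|k|^4}\,\|g_1\|_{4/3}\lesssim\tau e^{-\nu\tau/2}\|g_1\|_{4/3}$ is far stronger than needed --- but since the lemma is stated for all $\tau>0$ and $\nu>0$, you should add this line (or switch to the paper's Fourier-side computation, which handles all $\beta$ at once).
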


\begin{proof}
Denote $\beta=\nu \tau$.  The first inequality follows from Lemma
\ref{leKbeta} (see the bound for $\tilde K$ therein). For the second inequality denote
\begin{align}
K_{\beta}=\mathcal F^{-1}\left( \beta^{\frac 12} |k|^2 e^{-\beta |k|^4}  \right).
\end{align}
We then have $\| K_{\beta} \|_{L_x^4(\mathbb T^2)}\lesssim \| \widehat{K_{\beta} } \|_{l_k^{\frac 43} (\mathbb Z^2)} \lesssim
\; \beta^{-\frac 38}$.
\end{proof}

\begin{lem} \label{lem2.3}
Let $d\ge 1$.
If $E_p= \int_{\mathbb T^d} \frac 14 (v^2-1)^2 dx$, then
\begin{align}
\| v\|_{L^4(\mathbb T^d)} \lesssim 1+{E_p}^{\frac 14},\qquad
\| v^3 -v \|_{L^{\frac 43}(\mathbb T^d)} \lesssim E_p^{\frac 12} (1+ E_p^{\frac 14}).
\end{align}
\end{lem}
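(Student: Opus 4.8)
The plan is to extract everything from the single identity
\begin{align*}
4E_p = \int_{\mathbb T^d} (v^2-1)^2\, dx = \|v^2-1\|_2^2,
\end{align*}
which says precisely that $\|v^2-1\|_2 = 2 E_p^{1/2}$. Both claimed bounds will then reduce to elementary applications of H\"older's inequality on the finite-measure space $\mathbb T^d$.

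For the first bound I would expand the square and use the identity $v^4 = (v^2-1)^2 + 2v^2 - 1$ to write
\begin{align*}
\|v\|_4^4 = \int_{\mathbb T^d} v^4\, dx = \int_{\mathbb T^d}\big((v^2-1)^2 + 2v^2 - 1\big)\, dx \le 4E_p + 2\|v\|_2^2.
\end{align*}
Since $\mathbb T^d$ has finite measure, H\"older's inequality gives $\|v\|_2 \le |\mathbb T^d|^{1/4}\,\|v\|_4$, so $\|v\|_2^2 \lesssim \|v\|_4^2$. Setting $X = \|v\|_4^2$, the previous display becomes a quadratic inequality $X^2 - cX - 4E_p \le 0$ with $c = 2|\mathbb T^d|^{1/2}$ an absolute constant. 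Solving for $X$ yields $X \le c + 2 E_p^{1/2} \lesssim 1 + E_p^{1/2}$, and taking square roots gives $\|v\|_4 \lesssim (1+E_p^{1/2})^{1/2} \lesssim 1 + E_p^{1/4}$, which is the first claim.

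For the second bound I would factor $v^3 - v = v(v^2-1)$ and split the two factors with H\"older using the exponent relation $\tfrac14 + \tfrac12 = \tfrac34$:
\begin{align*}
\|v^3-v\|_{4/3} = \|v(v^2-1)\|_{4/3} \le \|v\|_4\,\|v^2-1\|_2 = 2 E_p^{1/2}\,\|v\|_4.
\end{align*}
Feeding in the first bound $\|v\|_4 \lesssim 1 + E_p^{1/4}$ immediately produces $\|v^3-v\|_{4/3} \lesssim E_p^{1/2}(1+E_p^{1/4})$, as desired.

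The computation is entirely elementary and I expect no genuine obstacle; the only point requiring a little care is the first estimate, where the term $2\|v\|_2^2$ cannot simply be discarded but must instead be absorbed back into $\|v\|_4^2$ through the finite-measure H\"older inequality, turning the estimate into a quadratic inequality for $X = \|v\|_4^2$ rather than an immediate bound. Everything else is a direct application of H\"older with the indicated exponents.
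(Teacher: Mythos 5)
Your proof is correct and takes essentially the same route as the paper: the second estimate is exactly the paper's H\"older step $\|v(v^2-1)\|_{4/3}\le \|v^2-1\|_2\,\|v\|_4$ combined with the first bound, and the first bound (which the paper dismisses as obvious) follows from your expansion. The only remark is that the quadratic-inequality detour is avoidable: the pointwise bound $v^4\le 2(v^2-1)^2+2$ integrates directly to $\|v\|_4^4\lesssim 1+E_p$, giving $\|v\|_4\lesssim 1+E_p^{1/4}$ at once.
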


\begin{proof}
Obvious.
For the second inequality, note that
$\| (v^2-1)v \|_{\frac 43}\le \| v^2-1\|_2 \|v\|_4$.
\end{proof}

\section{Analysis of the propagator $S_L(\tau) S_N (\tau)$ }
In this section we analyze in detail the propagator $S_L(\tau) S_N(\tau)$. 
If $u=S_L (\tau) S_N(\tau) w$, then
\begin{align} \label{A3.1}
u = e^{-\tau \nu \Delta^2} \Bigl( w + \tau \Delta (w^3-w) \Bigr).
\end{align}
Denote 
\begin{align}
E_1(w) = \frac 1 {2\tau} \| |\nabla|^{-1}
(e^{\tau \nu \Delta^2} -1)^{\frac 12} w\|_2^2 + \frac 1 4 \int_{\mathbb T^2} (w^2-1)^2 dx.
\end{align}

\begin{thm}[One-step energy stability] \label{thm3.1}
Suppose $w$ has mean zero and $E_1(w) $ is finite. 
We have 
\begin{align}
E_1(u) -E_1(w)
 +\left(\frac 12+\sqrt{\frac{2\nu}{\tau}} \right) \|u-w\|_2^2 
\le   \|u-w \|_2^2 \cdot \frac 32 \operatorname{max}
   \{ \|u\|_{\infty}^2, \;
 \| w \|_{\infty}^2 \}.
 \end{align}
\end{thm}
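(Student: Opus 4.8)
The plan is to split the energy as $E_1=G+P$, where $G(w)=\frac{1}{2\tau}\||\nabla|^{-1}(e^{\tau\nu\Delta^2}-1)^{1/2}w\|_2^2$ is the modified kinetic part and $P(w)=\frac14\int_{\mathbb T^2}(w^2-1)^2\,dx$ is the double–well part, and to treat the two increments $G(u)-G(w)$ and $P(u)-P(w)$ by complementary mechanisms. Throughout I write $g=f(w)=w^3-w$ and $v=S_N(\tau)w=w+\tau\Delta g$, so that $u=e^{-\tau\nu\Delta^2}v$ and hence $v=e^{\tau\nu\Delta^2}u$; here $\langle\cdot,\cdot\rangle$ is the $L^2(\mathbb T^2)$ inner product. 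The goal is to show that every term organizes into a manifestly nonpositive quadratic form in $u-w$, up to the stated cubic remainder.

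For the potential I would keep the destabilizing quadratic piece of $F(s)=\frac14 s^4-\frac12 s^2+\frac14$ \emph{exactly}, rather than hiding it in a crude remainder. Using $\frac14(u^4-w^4)=w^3(u-w)+\frac32\xi^2(u-w)^2$ (second–order Taylor, $\xi$ between $u$ and $w$) together with the exact expansion $-\frac12(u^2-w^2)=-w(u-w)-\frac12(u-w)^2$, one gets the pointwise identity $F(u)-F(w)=f(w)(u-w)+(\tfrac32\xi^2-\tfrac12)(u-w)^2$. Integrating and bounding $\xi^2\le\max\{u^2,w^2\}$ pointwise yields
\[
P(u)-P(w)\le \langle f(w),u-w\rangle-\tfrac12\|u-w\|_2^2+\tfrac32\max\{\|u\|_{\infty}^2,\|w\|_{\infty}^2\}\|u-w\|_2^2 .
\]
The first term is the coupling to be cancelled below, the explicit $-\frac12\|u-w\|_2^2$ is precisely the ``$\frac12$'' in the stated constant, and the last term is exactly the cubic remainder on the right of the theorem.

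For the kinetic part I would prove the exact identity
\[
G(u)-G(w)+\langle f(w),u-w\rangle=-\frac{1}{2\tau}\big\langle (e^{\tau\nu\Delta^2}+1)|\nabla|^{-2}(u-w),\,u-w\big\rangle .
\]
To obtain it, rewrite the coupling via the scheme: since $\Delta g=(v-w)/\tau$ and $u-w$ has mean zero, $\langle f(w),u-w\rangle=\langle\Delta g,\Delta^{-1}(u-w)\rangle=\frac1\tau\langle v-w,\Delta^{-1}(u-w)\rangle$, and then substitute $v-w=e^{\tau\nu\Delta^2}u-w$. Writing $G(u)-G(w)=\frac{1}{2\tau}\langle|\nabla|^{-2}(e^{\tau\nu\Delta^2}-1)(u-w),u+w\rangle$ and adding, all multipliers are functions of $|k|$ alone, so the problem diagonalizes on the Fourier side; a direct computation shows that the mode-$k$ contribution of the left side equals $-\frac{e^{\tau\nu|k|^4}+1}{2\tau|k|^2}\,|\hat u(k)-\hat w(k)|^2$, which collapses the sum to the single nonpositive form above.

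Combining the two displays, the theorem reduces to the scalar inequality, for every $k\ne0$,
\[
\sqrt{\frac{2\nu}{\tau}}\le \frac{e^{\tau\nu|k|^4}+1}{2\tau|k|^2},
\]
which I would prove by the elementary chain $e^{\lambda}+1\ge 2e^{\lambda/2}\ge 2+\lambda$ (AM--GM, then $e^x\ge1+x$) with $\lambda=\tau\nu|k|^4$, giving $\frac{e^\lambda+1}{2\tau|k|^2}\ge\frac{1}{\tau|k|^2}+\frac{\nu|k|^2}{2}\ge\sqrt{2\nu/\tau}$ by one further AM--GM. Summing over $k$ then gives $G(u)-G(w)+\langle f(w),u-w\rangle+\sqrt{\tfrac{2\nu}{\tau}}\|u-w\|_2^2\le0$, and adding the potential estimate proves the claim. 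The main obstacle is the bookkeeping of the third paragraph: one must resist estimating the potential crudely, since that discards exactly the $-\frac12\|u-w\|_2^2$ one needs, and instead verify that the modified kinetic energy $G$---whose weight $(e^{\tau\nu\Delta^2}-1)/|\nabla|^2$ is tailored to the linear propagator---combines with the explicit-Euler coupling term into a clean sign-definite form. Once that identity is in hand, the remaining estimate is a one-line AM--GM valid for all $\tau,\nu>0$, with no smallness assumption on $\tau$.
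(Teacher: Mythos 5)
Your proposal is correct and follows essentially the same route as the paper: the exact quadratic identity you derive on the Fourier side is precisely what the paper obtains by pairing the rewritten scheme $\frac{u-w}{\tau}+\frac{e^{\tau\nu\Delta^2}u-u}{\tau}=\Delta f(w)$ with $(-\Delta)^{-1}(u-w)$ and applying $b\cdot(b-a)=\frac12(|b|^2-|a|^2+|b-a|^2)$, your Lagrange-form Taylor expansion of $F$ matches the paper's integral-remainder version with the same $\frac32\max\{\|u\|_\infty^2,\|w\|_\infty^2\}$ bound, and the closing chain $e^\lambda+1\ge 2+\lambda$ followed by AM--GM is identical. The differences are purely presentational.
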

\begin{proof}
Recall $f(w)= w^3-w$.
We rewrite \eqref{A3.1} as
\begin{align}
\frac {u-w}{\tau} + \frac {e^{\tau \nu \Delta^2} u -u} {\tau}
=\Delta (f(w)).
\end{align}

Taking the $L^2$ inner product with $(-\Delta)^{-1}(u-w)$ on both sides and
applying the identity $ b\cdot (b-a) = \frac 12 ( |b|^2-|a|^2+|b-a|^2)$,
we get
\begin{eqnarray}
  && \frac 1 {\tau} \| |\nabla|^{-1} (u-w) \|_2^2
  + \frac 12\Big( \| T u \|_2^2 -\|T w\|_2^2
+ \| T(u-w) \|_2^2 \Big) \nonumber \\
& =& ( \Delta ( f(w) ), (-\Delta)^{-1} (u-w) ),
\end{eqnarray}
where $T=|\nabla|^{-1} \tau^{-\frac 12} (e^{\tau \nu\Delta^2} -1)^{\frac 12}$. 
Clearly
\begin{align} \notag
 (\Delta ( f(w) ), (-\Delta)^{-1}(u-w) ) = - (f(w), u-w).
 \end{align}
Introduce the auxiliary function $g(s)=F(w+s(u-w))$, where $F(z)=\frac 14 (z^2-1)^2$.  By using the Taylor expansion
$g(1)=g(0)+g^{\prime}(0) + \int_0^1 g^{\prime\prime}(s) (1-s)ds$,
we get
\begin{align}
F(u) =& F(w) +f(w) (u-w) -\frac 1 2 (u-w)^2 \notag \\
& \;\; + (u-w)^2 \int_0^1 \tilde f^{\prime}( w+s(u-w) )  (1-s)ds,
\end{align}
where $\tilde f(z)=z^3$ and $\tilde f^{\prime}(z)=3z^2$ for $z\in \mathbb R$. From this it is easy to see
that
\begin{align}
&  - (f(w), u-w) \notag \\
 \le&  F(w) - F(u) -\frac 1 2 \|u-w\|_2^2
+\|u-w\|_2^2 \cdot \frac 32
\max\{\|u\|_{\infty}^2,\|w\|_{\infty}^2\}.
\end{align}
Thus
\begin{align}
  & E_1(u)-E_1(w)+ \frac 1 {\tau} \| |\nabla|^{-1} (u-w) \|_2^2
+\frac{1}2 \| T(u-w) \|_2^2  + \frac 12 \|u-w \|_2^2  \notag \\
\le& \|u-w\|_2^2 \cdot \frac 32
\max\{\|u\|_{\infty}^2,\|w\|_{\infty}^2\}.
\end{align}
Now observe that for $\xi \ne 0$, 
\begin{align}
\frac 1 {\tau |\xi|^2}
+ \frac {e^{\tau \nu |\xi|^4} -1} {2\tau |\xi|^2}
=\frac {e^{\tau \nu |\xi|^4} +1} {2\tau |\xi|^2}
\ge \frac {2+\tau \nu |\xi|^4} {2\tau |\xi|^2} 
\ge \sqrt{\frac {2\nu} {\tau} }.
\end{align}
It follows that 
\begin{align}
\frac 1 {\tau} \| |\nabla|^{-1} (u-w) \|_2^2
+\frac 12 \| T(u-w)\|_2^2 \ge \sqrt{\frac {2\nu}{\tau}}
\| u-w\|_2^2.
\end{align}
The desired inequality  follows easily.
\end{proof}

\begin{lem} \label{lem3.1}
We have
\begin{align} \label{3.11}
\| u \|_{\infty}
\le c_1 \cdot (\nu \tau)^{-\frac 18}
(1+E_1(w)^{\frac 14}) +
c_1 \cdot \tau (\nu \tau)^{-\frac 78} E_1(w)^{\frac 12}
(1+E_1(w)^{\frac 14} ),
\end{align}
where $c_1>0$ is an absolute constant. Assume
\begin{align}
\| w\|_{\infty} \le \alpha_1 (\nu \tau)^{-\frac 18} + \alpha_2
\tau (\nu \tau)^{-\frac 78},
\end{align}
for some constants $\alpha_1$, $\alpha_2$ satisfying
\begin{align}
\alpha_1\ge c_1 (1+E_1(w)^{\frac 14}), 
\qquad \alpha_2\ge c_1
\cdot E_1(w)^{\frac 12} (1+E_1(w)^{\frac 14} ).
\end{align}
Define $\alpha_*= \max\{\alpha_1, \, \alpha_2 \}$. If
\begin{align}
0<\tau<   c \cdot \min\{ \alpha_*^{-8}, \alpha_*^{-\frac 83} \} \nu^3,
\end{align}
where $c>0$ is a sufficiently small absolute constant, then 
\begin{align} \label{3.15}
E_1(u) \le E_1(w).
\end{align}
\end{lem}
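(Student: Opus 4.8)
The plan is to treat the two assertions in turn, deriving the pointwise bound \eqref{3.11} first and then feeding it into the one-step estimate of Theorem \ref{thm3.1}. For \eqref{3.11} I would split the explicit formula \eqref{A3.1} as
\[
u = e^{-\tau\nu\Delta^2} w + \tau\Delta e^{-\tau\nu\Delta^2}(w^3-w),
\]
and estimate each summand with Lemma \ref{lem2.2}. Since $w$ has mean zero, the first inequality there gives $\|e^{-\tau\nu\Delta^2} w\|_\infty \lesssim (\nu\tau)^{-\frac18}\|w\|_4$, and the second inequality applied with $g_1=w^3-w$ gives $\|\tau\Delta e^{-\tau\nu\Delta^2}(w^3-w)\|_\infty \lesssim \tau(\nu\tau)^{-\frac78}\|w^3-w\|_{\frac43}$. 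To convert these into the energy $E_1(w)$, note that the Fourier multiplier $e^{\tau\nu|k|^4}-1$ is nonnegative, so the first summand of $E_1(w)$ is nonnegative and hence $E_p:=\frac14\int_{\mathbb T^2}(w^2-1)^2\,dx \le E_1(w)$. Lemma \ref{lem2.3} together with the monotonicity of $x\mapsto 1+x^{\frac14}$ and $x\mapsto x^{\frac12}(1+x^{\frac14})$ then yields $\|w\|_4 \lesssim 1+E_1(w)^{\frac14}$ and $\|w^3-w\|_{\frac43}\lesssim E_1(w)^{\frac12}(1+E_1(w)^{\frac14})$, and substituting produces \eqref{3.11}.

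For the energy decrease \eqref{3.15} I would invoke Theorem \ref{thm3.1}, which bounds
\[
E_1(u)-E_1(w)+\Bigl(\tfrac12+\sqrt{\tfrac{2\nu}{\tau}}\Bigr)\|u-w\|_2^2 \le \tfrac32\|u-w\|_2^2\max\{\|u\|_\infty^2,\|w\|_\infty^2\}.
\]
Since the common factor $\|u-w\|_2^2$ appears on both sides, it suffices to verify the coefficient inequality $\tfrac32\max\{\|u\|_\infty^2,\|w\|_\infty^2\}\le \tfrac12+\sqrt{2\nu/\tau}$, after which $E_1(u)-E_1(w)\le 0$ is immediate. Under the stated hypotheses on $\alpha_1,\alpha_2$, the bound \eqref{3.11} gives $\|u\|_\infty \le \alpha_1(\nu\tau)^{-\frac18}+\alpha_2\tau(\nu\tau)^{-\frac78}$, which is exactly the bound assumed for $\|w\|_\infty$; hence both $\|u\|_\infty^2$ and $\|w\|_\infty^2$ are at most $2\alpha_1^2(\nu\tau)^{-\frac14}+2\alpha_2^2\tau^2(\nu\tau)^{-\frac74}$ by $(a+b)^2\le 2a^2+2b^2$.

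It remains to choose $\tau$ so that $3\alpha_1^2(\nu\tau)^{-\frac14}+3\alpha_2^2\tau^2(\nu\tau)^{-\frac74}\le \sqrt{2\nu/\tau}$, which I would do by splitting the budget between the two terms. The first term equals $3\alpha_1^2\nu^{-\frac14}\tau^{-\frac14}$ and is at most half the right side precisely when $\tau\lesssim \alpha_1^{-8}\nu^3$; the second equals $3\alpha_2^2\nu^{-\frac74}\tau^{\frac14}$ and is controlled when $\tau\lesssim \alpha_2^{-\frac83}\nu^3$. Since $\alpha_1,\alpha_2\le\alpha_*$, the two requirements follow at once from $0<\tau<c\min\{\alpha_*^{-8},\alpha_*^{-\frac83}\}\nu^3$ for a sufficiently small absolute $c$, giving \eqref{3.15}.

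The computation is essentially a chain of applications of the preceding results, so I expect no serious obstacle; the only point requiring care is the exponent bookkeeping in $\nu$ and $\tau$, making sure the two pieces of the pointwise bound scale so as to land exactly on the thresholds $\alpha_*^{-8}$ and $\alpha_*^{-\frac83}$, and checking that $E_p\le E_1(w)$ so that Lemma \ref{lem2.3} may be applied with $E_1(w)$ in place of $E_p$.
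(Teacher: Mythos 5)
Your proposal is correct and follows exactly the route of the paper's own (much terser) proof: estimate the two summands of $u=e^{-\tau\nu\Delta^2}w+\tau\Delta e^{-\tau\nu\Delta^2}(w^3-w)$ via Lemma \ref{lem2.2} and Lemma \ref{lem2.3} (using $E_p\le E_1(w)$), then reduce \eqref{3.15} via Theorem \ref{thm3.1} to the coefficient inequality $\frac32\max\{\|u\|_\infty^2,\|w\|_\infty^2\}\le\frac12+\sqrt{2\nu/\tau}$, which the paper likewise reduces to $\sqrt{2\nu/\tau}\gg\alpha_*^2(\nu\tau)^{-1/4}$ and $\sqrt{2\nu/\tau}\gg\alpha_*^2\tau^2(\nu\tau)^{-7/4}$. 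Your exponent bookkeeping landing on the thresholds $\tau\lesssim\alpha_*^{-8}\nu^3$ and $\tau\lesssim\alpha_*^{-8/3}\nu^3$ is exactly right.
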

\begin{proof}
The bound \eqref{3.11} follows from Lemma \ref{lem2.2} and Lemma \ref{lem2.3}.
To show \eqref{3.15}, by Theorem \ref{thm3.1}, we only need to check the inequality
\begin{align}
\frac 1 2 +\sqrt{\frac {2\nu}{\tau}}
\ge \frac 32 \cdot \max\{ \|u \|_{\infty}^2, \|w\|_{\infty}^2\}.
\end{align}
It amounts to checking the inequalities
\begin{align}
\sqrt{\frac {2\nu}{\tau}}
\gg \alpha_*^2 (\nu \tau)^{-\frac 14}, \quad
\sqrt{\frac {2\nu}{\tau}} \gg \alpha_*^2 \tau^2 (\nu \tau)^{-\frac 74}.
\end{align}
The result is obvious.
\end{proof}

The following lemma shows that the energy $E_1(w)$ is well-defined.
\begin{lem} \label{lem3.2}
Suppose $u^0 \in H^1(\mathbb T^2)$ and has mean zero.  Set $w=S_L({\tau} ) 
S_N(\tau) u^0$.
Then
\begin{align}
& E_1(w) \le c_0^{(1)} (1+ \nu +\nu^{-1})^4 ( 1+ \|u^0\|_{H^1}^3)^4; \notag \\
& \| w\|_{\infty} \le  c_0^{(2)} (\nu \tau)^{-\frac 18} 
(1+\nu^{-1}) (\| u^0 \|_{H^1}+ \|u^0\|_{H^1}^3),
\end{align}
where $c_0^{(1)}>0$, $c_0^{(2)}>0$ are absolute constants.
\end{lem}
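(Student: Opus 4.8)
The plan is to decompose
$w = e^{-\tau\nu\Delta^2}u^0 + \tau\Delta e^{-\tau\nu\Delta^2}g$ with $g=(u^0)^3-u^0$, and to estimate the three ingredients of $E_1(w)$ separately: the sup norm $\|w\|_\infty$, the potential part $\tfrac14\int_{\mathbb T^2}(w^2-1)^2$, and the discrete gradient part $\tfrac1{2\tau}\| |\nabla|^{-1}(e^{\tau\nu\Delta^2}-1)^{1/2}w\|_2^2$. Each will be reduced to a norm of $u^0$ controllable by $\|u^0\|_{H^1}$ through the two-dimensional Sobolev embeddings $H^1(\mathbb T^2)\hookrightarrow L^p$ ($p<\infty$) together with Lemma \ref{lem2.3}.

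For $\|w\|_\infty$ I would apply the first inequality of Lemma \ref{lem2.2} to the mean-zero term $e^{-\tau\nu\Delta^2}u^0$, giving $\le C(\nu\tau)^{-1/8}\|u^0\|_4\lesssim(\nu\tau)^{-1/8}\|u^0\|_{H^1}$, and the second inequality of Lemma \ref{lem2.2} to $\tau\Delta e^{-\tau\nu\Delta^2}g$, giving $\le C\tau(\nu\tau)^{-7/8}\|g\|_{4/3}$ with $\|g\|_{4/3}\lesssim\|u^0\|_4^3+\|u^0\|_4\lesssim\|u^0\|_{H^1}^3+\|u^0\|_{H^1}$. In the relevant regime $\nu\tau\lesssim1$ the factor $\tau(\nu\tau)^{-7/8}$ is $\le(\nu\tau)^{-1/8}\nu^{-1}$, which yields the claimed sup bound. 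For the potential part I would use $\tfrac14\int(w^2-1)^2\lesssim 1+\|w\|_4^4$ and bound $\|w\|_4$ uniformly in $\tau$ by Young's inequality rather than by $\|w\|_\infty$ (which degenerates as $\tau\to0$): since $e^{-\tau\nu\Delta^2}$ acts on the mean-zero $u^0$ as convolution with $\widetilde K$ and $\|\widetilde K\|_1\lesssim1$ by Lemma \ref{leKbeta}, the linear piece is $\lesssim\|u^0\|_4$; and $\tau\Delta e^{-\tau\nu\Delta^2}$ is $\tau^{1/2}\nu^{-1/2}$ times convolution with the rescaled kernel $K_\beta$ ($\beta=\nu\tau$) from the proof of Lemma \ref{lem2.2}, with $\|K_\beta\|_2\lesssim(\nu\tau)^{-1/4}$, so by Young's inequality (with exponents $2$, $4/3$, $4$) the nonlinear piece is $\lesssim\tau^{1/4}\nu^{-3/4}\|g\|_{4/3}$. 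This gives $\|w\|_4\lesssim\|u^0\|_{H^1}+\tau^{1/4}\nu^{-3/4}(1+\|u^0\|_{H^1}^3)$, of the desired form.

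The main obstacle is the discrete gradient part, because $g=(u^0)^3-u^0$ carries no Sobolev regularity beyond $L^2$ and the two derivatives in $\tau\Delta g$ look dangerous; the smoothing of $e^{-\tau\nu\Delta^2}$ must be used quantitatively. Writing this term in Fourier with $v=u^0+\tau\Delta g$, so that $\hat w(k)=e^{-\tau\nu|k|^4}\hat v(k)$, it equals $\tfrac1{2\tau}\sum_{k\ne0}|k|^{-2}(e^{-a}-e^{-2a})|\hat v(k)|^2$ with $a=\tau\nu|k|^4$. The key elementary inequality $e^{-a}-e^{-2a}=e^{-a}(1-e^{-a})\le a\,e^{-a}$ collapses the multiplier to $\le\tfrac{\nu}{2}|k|^2e^{-a}$. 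Splitting $|\hat v|^2\le2|\hat{u^0}|^2+2\tau^2|k|^4|\hat g|^2$, the first part is $\lesssim\nu\|\nabla u^0\|_2^2\lesssim\nu\|u^0\|_{H^1}^2$, while the second is bounded by $\big(\sup_k\nu\tau^2|k|^6e^{-a}\big)\|g\|_2^2$; since $\nu\tau^2|k|^6e^{-a}=\tau^{1/2}\nu^{-1/2}a^{3/2}e^{-a}$ and $\sup_{a\ge0}a^{3/2}e^{-a}<\infty$, this is $\lesssim\tau^{1/2}\nu^{-1/2}\|g\|_2^2$, where $\|g\|_2\lesssim\|u^0\|_6^3+\|u^0\|_2\lesssim1+\|u^0\|_{H^1}^3$ by $H^1\hookrightarrow L^6$. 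The point is that only $L^2$ control of $g$ is needed, precisely because the weight $a^{3/2}e^{-a}$ is bounded.

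Collecting the three estimates and absorbing the powers of $\tau$ (harmless in the bounded-$\tau$, or $\nu\tau\lesssim1$, regime of interest) and of $\nu$ into the envelope $(1+\nu+\nu^{-1})^4(1+\|u^0\|_{H^1}^3)^4$ yields the stated bound on $E_1(w)$, with the sup bound established along the way. I expect the only genuinely delicate step to be the gradient term above; the remaining pieces are direct applications of Lemmas \ref{leKbeta}--\ref{lem2.3} and Sobolev embedding.
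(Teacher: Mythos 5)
Your proof is correct and reaches the stated bounds, but it is organized differently from the paper's. The paper funnels everything through a single $H^1$ estimate: it writes $w=S_L(\tau/2)g$ with $g=S_L(\tau/2)\bigl(u^0+\tau\Delta f(u^0)\bigr)$, proves $\|g\|_{H^1},\|w\|_{H^1}\lesssim \|u^0\|_{H^1}+\nu^{-1}(\|u^0\|_{H^1}+\|u^0\|_{H^1}^3)$ via the multiplier $\tau|\nabla|^{3.5}e^{-\nu\tau\Delta^2}$ acting on $L^{4/3}$ together with the 2D embedding $\|h\|_2\lesssim \||\nabla|^{1/2}h\|_{4/3}$, and then reads off the potential term from $\|w\|_4\lesssim\|w\|_{H^1}$, the discrete-gradient term from the symmetrized identity $\frac1{2\tau}(|\nabla|^{-2}(e^{\tau\nu\Delta^2}-1)w,w)=\nu\bigl(\tfrac{1-e^{-\tau\nu\Delta^2}}{2\nu\tau\Delta^2}|\nabla|g,|\nabla|g\bigr)\le \tfrac{\nu}{2}\|g\|_{\dot H^1}^2$, and $\|w\|_\infty\lesssim(\nu\tau)^{-1/8}\|g\|_{H^1}$. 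You instead estimate each piece of $E_1$ directly, using Young's inequality in $L^4$ for the potential term and a Fourier-side computation for the gradient term based on $e^{-a}(1-e^{-a})\le a e^{-a}$ and $\sup_{a\ge0}a^{3/2}e^{-a}<\infty$, which requires only $\|(u^0)^3-u^0\|_2$; this is a valid and arguably more elementary alternative to the half-step trick, at the cost of a few more explicit multiplier computations. One small caveat: in several places you discard factors such as $\tau(\nu\tau)^{-7/8}=(\nu\tau)^{1/8}\nu^{-1}$, $\tau^{1/4}\nu^{-3/4}=(\nu\tau)^{1/4}\nu^{-1}$ and $\tau^{1/2}\nu^{-1/2}=(\nu\tau)^{1/2}\nu^{-1}$ by invoking ``the regime $\nu\tau\lesssim1$,'' which is not a hypothesis of the lemma (the constants are claimed absolute for all $\tau>0$). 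This is easily repaired without any restriction on $\tau$ by using that $|k|\ge1$ for the nonzero torus modes, e.g. $\tau|k|^{7/2}e^{-\nu\tau|k|^4}\le\tau|k|^4e^{-\nu\tau|k|^4}\le\nu^{-1}\sup_{a\ge0}ae^{-a}$, but the step should be made explicit.
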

\begin{proof}
First we note that
\begin{align}
\frac 14 \int (w^2-1)^2 dx \lesssim 1 + \| w\|_4^4 \lesssim 1 + \| w\|_{H^1}^4.
\end{align}
Since $w= S_L(\tau) (u^0 +\tau \Delta ( f(u^0) ) )$, it follows that
(below $\overline{f(u^0)}$ denotes the average of $f(u^0)$ on $\mathbb T^2$)
\begin{align}
\| w\|_{H^1} &\lesssim \| u^0\|_{H^1} + \| \tau \Delta |\nabla| e^{-\nu \tau \Delta^2} ( f(u^0) ) \|_2 \notag \\
&\lesssim \| u^0 \|_{H^1} + \| \tau |\nabla|^{3.5} e^{-\nu \tau \Delta^2}
( f(u^0) - \overline{f(u^0) } ) \|_{\frac 43}  \notag \\
& \lesssim \| u^0\|_{H^1} + \nu^{-1}  (\|u^0\|_{H^1} + \| u^0 \|_{H^1}^3). 
\end{align}
Write $w= S_L(\frac {\tau} 2) g$, where $g= S_L(\frac {\tau} 2) ( u^0 +
\tau \Delta (f (u^0) ) )$. By a similar estimate as above, we have
\begin{align}
\| g\|_{H^1} \lesssim 
\| u^0\|_{H^1} + \nu^{-1}  (\|u^0\|_{H^1} + \| u^0 \|_{H^1}^3). 
\end{align}
Clearly
\begin{align}
\frac 1 {2\tau}
(|\nabla|^{-2}(e^{\tau \nu \Delta^2} -1) w, w )
& = \nu ( \frac {1- e^{-\tau \nu \Delta^2} } {2\nu\tau \Delta^2} |\nabla | g, |\nabla | g ) 
\notag \\
& \lesssim \nu \| g \|_{H^1}^2.
\end{align}
The desired bound on $E_1(w)$ easily follows.

For the $L^{\infty}$-bound, we note that by Lemma \ref{lem2.2},
\begin{align}
\| w\|_{\infty} = \| e^{-\frac {\tau}2 \nu \Delta^2} g
\|_{\infty} 
\lesssim (\nu \tau)^{-\frac 18} \| g\|_4
\lesssim (\nu \tau)^{-\frac 18} \| g \|_{H^1}.
\end{align}
\end{proof}

 Theorem \ref{thm0} is a simplified version of the following Theorem. 

\begin{thm} \label{thm3.2}
Suppose $u^0 \in H^1(\mathbb T^2)$ and has mean zero.  Define
$u^1= S_L( \tau) S_N(\tau) u^0$. Then
\begin{align}
& E_1(u^1) \le c_0^{(1)} (1+ \nu +\nu^{-1})^4 ( 1+ \|u^0\|_{H^1}^3)^4; \notag \\
& \| u^1\|_{\infty} \le  c_0^{(2)} (\nu \tau)^{-\frac 18} 
(1+\nu^{-1}) (\| u^0 \|_{H^1}+ \|u^0\|_{H^1}^3),
\end{align}
where $c_0^{(1)}>0$, $c_0^{(2)}>0$ are absolute constants and we recall
\begin{align}
E_1( u^1 ) = \frac 1 {2\tau} \| |\nabla|^{-1}
(e^{\tau \nu \Delta^2} -1)^{\frac 12} u^1\|_{L^2(\mathbb T^2)}^2 + \frac 1 4 \int_{\mathbb T^2} ((u^1)^2-1)^2 dx.
\end{align}
Set
\begin{align}
\alpha=\max\Bigl\{ c_1 (1+E_1(u^1)^{\frac 14}),
\; c_1 E_1(u^1)^{\frac 12} (1+E_1(u^1)^{\frac 14} ),\;
c_0^{(2)} (1+\nu^{-1}) (\| u^0 \|_{H^1}+ \|u^0\|_{H^1}^3) \Bigr\},
\end{align}
where $c_1$ is the same absolute constant in \eqref{3.11}.   Define the iterates
\begin{align}
u^{n+1} = S_L(\tau) S_N(\tau) u^{n}, \quad n\ge 1.
\end{align}
If $0<\tau < \tau_* = c \cdot \min\{ \alpha^{-8}, \alpha^{-\frac 83} \} \nu^3$ 
where $c>0$ is a sufficiently small absolute constant, then
it holds that
\begin{align}
& E_1(u^{n+1} ) \le E_1(u^n), \qquad \forall\, n\ge 1; \\
& \sup_{n\ge 0} \|u^n \|_{\infty}
\le \alpha (\nu \tau)^{-\frac 18} + \alpha \tau (\nu \tau)^{-\frac 78}.
\end{align}
Furthermore, if $u^0  \in {H^{k_0}(\mathbb T^2)} $ for some integer $k_0\ge 2$, then
we also have the uniform $H^{k_0}$ bound:
\begin{align} \label{3.28}
\sup_{n\ge 0} \| u^n \|_{H^{k_0} (\mathbb T^2)} \le B_1<\infty,
\end{align}
where $B_1>0$ depends on ($\| u^0 \|_{H^{k_0}(\mathbb T^2)}$, $\nu$, $k_0$). 
\end{thm}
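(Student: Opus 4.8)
The plan is to prove Theorem \ref{thm3.2} in two stages: first the energy/$L^\infty$ bounds, then bootstrap to the higher Sobolev norms. The base case bounds on $E_1(u^1)$ and $\|u^1\|_\infty$ are exactly Lemma \ref{lem3.2} applied with $w=u^1$, so those two displayed inequalities are immediate. The crux of the first stage is to set up a self-consistent induction: I would show that the choice of $\alpha$ is large enough that the hypotheses of Lemma \ref{lem3.1} are met at every step. Concretely, assuming inductively that $E_1(u^n)\le E_1(u^1)$ and that $\|u^n\|_\infty \le \alpha(\nu\tau)^{-1/8}+\alpha\tau(\nu\tau)^{-7/8}$, I verify that $\alpha_1=\alpha$ and $\alpha_2=\alpha$ satisfy the two lower bounds $\alpha_1\ge c_1(1+E_1(u^n)^{1/4})$ and $\alpha_2\ge c_1 E_1(u^n)^{1/2}(1+E_1(u^n)^{1/4})$ required in Lemma \ref{lem3.1}; this holds precisely because $E_1$ is monotone nonincreasing, so the relevant quantities at step $n$ are dominated by those at step $1$, which is how $\alpha$ was defined. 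Then Lemma \ref{lem3.1} yields $E_1(u^{n+1})\le E_1(u^n)$, closing the energy induction, and the smallness condition $\tau<\tau_*$ is exactly the condition $0<\tau<c\min\{\alpha_*^{-8},\alpha_*^{-8/3}\}\nu^3$ in that lemma with $\alpha_*=\alpha$. The uniform $L^\infty$ bound then follows from \eqref{3.11} since the right-hand side there is controlled by $\alpha(\nu\tau)^{-1/8}+\alpha\tau(\nu\tau)^{-7/8}$ once $E_1(u^n)\le E_1(u^1)$.

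For the higher-order bound \eqref{3.28}, I would argue by induction on the regularity index and run a discrete Gr\"onwall/energy estimate in $\dot H^{k_0}$. Writing one step as $u^{n+1}=e^{-\tau\nu\Delta^2}(u^n+\tau\Delta((u^n)^3-u^n))$, I test the $\dot H^{k_0}$ seminorm and exploit two features: the smoothing from $S_L(\tau)=e^{-\tau\nu\Delta^2}$, which via Lemma \ref{leKbeta} absorbs the two derivatives produced by $\Delta((u^n)^3-u^n)$ at the cost of a favorable power of $\tau$, and the already-established uniform $L^\infty$ and $H^1$ control, which lets me estimate the nonlinear term $(u^n)^3$ in $H^{k_0}$ by the Moser-type inequality $\|(u^n)^3\|_{H^{k_0}}\lesssim \|u^n\|_\infty^2\|u^n\|_{H^{k_0}}$. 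The key structural point is that the linear propagator contributes a multiplier $e^{-\tau\nu|k|^4}$ that is bounded by $1$ and provides a genuine contraction on the high frequencies, so the per-step growth of $\|u^n\|_{H^{k_0}}$ is at worst of the form $(1+C\tau)\|u^n\|_{H^{k_0}}+C\tau$ with $C$ depending only on the already-controlled lower norms, $\nu$, and $k_0$. Summing this geometric recursion over $n$ with $n\tau$ unbounded is the delicate part, since a naive bound grows like $e^{Cn\tau}$ and is not uniform in $n$.

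The main obstacle, and where I would spend the most care, is obtaining a \emph{uniform-in-$n$} (i.e. uniform in time) $H^{k_0}$ bound rather than one that degrades exponentially. The resolution must again come from the gradient-flow structure: the dissipation hidden in the strict energy decay $E_1(u^{n+1})\le E_1(u^n)$, together with the parabolic smoothing of $S_L$, should give a genuine damping term in the $\dot H^{k_0}$ estimate that dominates the nonlinear forcing once the lower-order norms are bounded. The cleanest route is a frequency decomposition: on frequencies $|k|^4\gtrsim 1/(\nu\tau)$ the factor $e^{-\tau\nu|k|^4}$ is strongly contractive and kills any high-frequency growth in one step, while the remaining low-to-intermediate frequencies are controlled by the uniform $H^1$ bound and interpolation. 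I would therefore split $\|u^{n+1}\|_{\dot H^{k_0}}^2$ into a contractive high-frequency piece and a low-frequency piece bounded by the conserved quantities, showing that the map $\|u^n\|_{H^{k_0}}\mapsto\|u^{n+1}\|_{H^{k_0}}$ has an absorbing ball of radius $B_1$ depending only on $(\|u^0\|_{H^{k_0}},\nu,k_0)$. Once such an absorbing-ball/contraction estimate is in place, the uniform bound \eqref{3.28} follows by induction, completing the proof.
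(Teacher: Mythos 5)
Your first stage is correct and is essentially the paper's own argument: Lemma \ref{lem3.2} gives the two displayed bounds for $u^1$, and then Lemma \ref{lem3.1} is applied inductively with $\alpha_1=\alpha_2=\alpha$, the point being precisely the one you make, namely that the monotone decay of $E_1$ keeps the admissibility conditions on $\alpha_1,\alpha_2$ and the smallness condition on $\tau$ valid at every step, so both the energy decay and the $L^\infty$ bound propagate. No issue there.

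The gap is in your treatment of \eqref{3.28}. A one-step recursion cannot close with a constant $B_1$ independent of $\tau$. After you use the semigroup to absorb the two derivatives in $\Delta f(u^n)$, the per-step nonlinear forcing is of size $\tau(\nu\tau)^{-1/2}\|f(u^n)\|_{H^{k_0}}\sim \tau^{1/2}\nu^{-1/2}(1+\|u^n\|_\infty^2)\|u^n\|_{H^{k_0}}$, i.e.\ $O(\tau^{1/2})$, while the damping from $e^{-\tau\nu|k|^4}$ on the nonzero modes is only $1-O(\nu\tau)$; the resulting recursion $a_{n+1}\le (1-c\nu\tau+C\tau^{1/2})a_n+\cdots$ has a factor exceeding $1$ for small $\tau$ and yields growth like $e^{CT/\sqrt{\tau}}$. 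Your frequency splitting does not repair this: cutting at $|k|\sim (\nu\tau)^{-1/4}$ makes the high block contract by a fixed factor $e^{-1}$, but the complementary low block is only controlled by $N^{k_0-1}\|u\|_{\dot H^1}\sim (\nu\tau)^{-(k_0-1)/4}$, so your absorbing ball has radius blowing up as $\tau\to 0$. That is strictly weaker than the theorem (where $B_1$ depends only on $\|u^0\|_{H^{k_0}}$, $\nu$, $k_0$) and would also break Section 4, where Lemma \ref{lem4.1} needs a $\tau$-uniform $H^8$ bound. The paper's key device, which your proposal is missing, is to unfold the iteration back to the initial data and write $u^{n+1}=S_L((n+1)\tau)u^0+\tau\sum_{k=0}^{n}S_L((k+1)\tau)\Delta(f(u^{n-k}))$: the smoothing loss attached to the $k$-th term is then $((k+1)\tau\nu)^{-1/2}e^{-c(k+1)\tau\nu}$, and $\tau\sum_{k\ge 0}((k+1)\tau\nu)^{-1/2}e^{-c(k+1)\tau\nu}\lesssim \int_0^\infty (s\nu)^{-1/2}e^{-cs\nu}\,ds\lesssim \nu^{-1}$ uniformly in $n$ and $\tau$. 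Combined with your Moser estimate and a bootstrap on the regularity index (first a $\tau$-uniform $H^2$, hence $L^\infty$, bound, then $H^{k_0}$), this summation-in-time gain is what actually yields \eqref{3.28}; without it your argument does not close.
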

\begin{proof}
The estimates of $u^1$ follows from Lemma \ref{lem3.2}. The energy decay and $L^{\infty}$
bound on $u^n$ follows from Lemma \ref{lem3.1} and an induction argument. 
For 
\eqref{3.28}, we note that
\begin{align}
u^{n+1} & =S_L(\tau) S_N (\tau ) u^n \notag \\
& = S_L(\tau) ( u^n + \tau \Delta ( f(u^n ) ) ) \notag \\
& = S_L(\tau) (S_L(\tau) u^{n-1} + \tau \Delta ( f(u^{n-1} ) ) ) + \tau S_L(\tau) \Delta
(f (u^n) ) \notag \\
& = \cdots \notag \\
& = S_L( (n+1)\tau) u^0 + \tau \sum_{k=0}^n  S_L( (k+1)\tau)  \Delta ( f(u^{n-k} ) ). 
\end{align}
The desired estimate then follows from the above using smoothing estimates (cf. \cite{lt2021}).
We omit
the details.
\end{proof}

\section{Proof of Theorem \ref{thm1}}
In this section we complete the proof of Theorem \ref{thm1}.
For convenience we shall set $\nu=1$. 
Since $u^{n+1} = S_L( {\tau}) S_N(\tau) u^n$, we have
\begin{align}
u^{n+1} 
& = e^{-\tau \Delta^2} u^n 
+\tau e^{- \tau \Delta^2} \Delta ( f(  u^n ) ).
\end{align}
We rewrite the above as
\begin{align}
u^{n+1} = (1+\tau \Delta^2)^{-1} u^n + \tau (1+\tau \Delta^2)^{-1} \Delta 
( f(u^n) ) +  (1+\tau \Delta^2)^{-1} g^n,  \label{4.2}
\end{align}
where
\begin{align}
g^n&= (1+\tau \Delta^2)
\Bigl( 
(e^{-\tau\Delta^2} - (1+\tau \Delta^2)^{-1}) u^n
 + \tau( e^{- \tau \Delta^2} -(1+\tau \Delta^2)^{-1} ) \Delta ( f (u^n) ) \Bigr).
\end{align}

\begin{lem} \label{lem4.1}
For some absolute constant $d_1>0$, it holds that
\begin{align}
\| g^n\|_2 \le  d_1 \tau^2 \cdot ( \| u^n \|_{H^8} + \| u^n \|_{H^8}^3).
\end{align}
\end{lem}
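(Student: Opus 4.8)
The plan is to pass to the Fourier side and read off $g^n$ as a Fourier multiplier applied to $u^n$ and to $f(u^n)=(u^n)^3-u^n$. Writing $\lambda=\tau|k|^4\ge 0$, every operator in the definition of $g^n$ becomes multiplication by a function of $\lambda$: on frequency $k$ we have $e^{-\tau\Delta^2}\mapsto e^{-\lambda}$, $(1+\tau\Delta^2)^{-1}\mapsto(1+\lambda)^{-1}$, $(1+\tau\Delta^2)\mapsto 1+\lambda$, and $\Delta\mapsto-|k|^2$. Consequently $\widehat{g^n}(k)$ splits into a linear part with symbol $\phi(\lambda):=(1+\lambda)e^{-\lambda}-1$ acting on $\widehat{u^n}(k)$, and a nonlinear part with symbol $-\tau|k|^2\phi(\lambda)$ acting on $\widehat{f(u^n)}(k)$. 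Both symbols vanish at $k=0$, so the mean-zero constraint (preserved by the scheme) causes no difficulty.

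The crux is two elementary pointwise bounds on $\phi$. Since $\phi(0)=0$ and $\phi'(\lambda)=-\lambda e^{-\lambda}$, one has $\phi(\lambda)=-\int_0^\lambda s e^{-s}\,ds$, hence $|\phi(\lambda)|\le\tfrac12\lambda^2$ for all $\lambda\ge 0$; moreover $\phi$ decreases monotonically from $0$ to $-1$, so $|\phi(\lambda)|\le 1$, and combining the two gives $|\phi(\lambda)|\le\min\{\tfrac12\lambda^2,1\}\le\lambda$ for all $\lambda\ge 0$. I will use the quadratic bound $|\phi(\lambda)|\le\tfrac12\tau^2|k|^8$ on the linear part — this is what supplies the full $\tau^2$ rate, at the cost of two extra derivatives — and the linear bound $|\phi(\lambda)|\le\tau|k|^4$ on the nonlinear part, where the extra factor $\tau|k|^2$ coming from $\Delta$ already contributes one power of $\tau$.

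With these bounds, Plancherel settles the estimate directly. For the linear part, $\sum_k|\phi(\tau|k|^4)|^2|\widehat{u^n}(k)|^2\le\tfrac14\tau^4\sum_k|k|^{16}|\widehat{u^n}(k)|^2=\tfrac14\tau^4\|u^n\|_{\dot H^8}^2$. For the nonlinear part, $\sum_k\tau^2|k|^4|\phi(\tau|k|^4)|^2|\widehat{f(u^n)}(k)|^2\le\tau^4\sum_k|k|^{12}|\widehat{f(u^n)}(k)|^2=\tau^4\|f(u^n)\|_{\dot H^6}^2$, so that $\|g^n\|_2\lesssim\tau^2\bigl(\|u^n\|_{\dot H^8}+\|f(u^n)\|_{\dot H^6}\bigr)$. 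It then remains to control $\|f(u^n)\|_{H^6}$. Since $6>d/2=1$, the space $H^6(\mathbb T^2)$ is a Banach algebra, whence $\|(u^n)^3\|_{H^6}\lesssim\|u^n\|_{H^6}^3$ and therefore $\|f(u^n)\|_{H^6}\le\|u^n\|_{H^6}+\|(u^n)^3\|_{H^6}\lesssim\|u^n\|_{H^8}+\|u^n\|_{H^8}^3$. Combining the two pieces yields the claimed bound $\|g^n\|_2\le d_1\tau^2(\|u^n\|_{H^8}+\|u^n\|_{H^8}^3)$.

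I expect the only genuinely delicate point to be the bookkeeping of which pointwise estimate on $\phi$ is used on each piece and of the resulting Sobolev indices: the second-order vanishing $|\phi|\lesssim\lambda^2$ is essential to produce the order-$\tau^2$ consistency rate but forces $\|u^n\|_{\dot H^8}$ on the linear term, while on the nonlinear term one can afford the weaker first-order bound $|\phi|\lesssim\lambda$ and land in $\dot H^6$, which is exactly the index at which the algebra estimate for $f(u^n)$ is clean. Everything beyond this is routine Plancherel and Sobolev product bookkeeping.
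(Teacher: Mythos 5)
Your proof is correct: the symbol computation $\phi(\lambda)=(1+\lambda)e^{-\lambda}-1$, the pointwise bounds $|\phi(\lambda)|\le\min\{\tfrac12\lambda^2,1\}\le\lambda$, the Plancherel bookkeeping, and the $H^6(\mathbb T^2)$ algebra estimate for $f(u^n)$ all check out and yield exactly the claimed $O(\tau^2)$ bound with the $H^8$ norm. The paper dismisses this lemma with ``Obvious,'' and your argument is precisely the standard Fourier-multiplier computation it is implicitly invoking, so there is nothing to add.
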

\begin{proof}
Obvious.
\end{proof}
Rewrite \eqref{4.2} as
\begin{align}
\frac {u^{n+1} -u^n} {\tau} = -\Delta^2 u^{n+1}
+ \Delta (f(u^n) ) + g^n.
\end{align}
Note that  $\sup_{n\ge 0} \| u^n \|_{H^8} \lesssim 1$.  With the help of Lemma \ref{lem4.1}, the proof of Theorem \ref{thm1}
then follows from Proposition 4.1 of \cite{LQT16}.


\frenchspacing
\bibliographystyle{plain}

\begin{thebibliography}{99}

\bibitem{wise13}
A. Baskaran, J. S. Lowengrub, C. Wang, and S. M. Wise.
Convergence Analysis of a Second Order Convex Splitting Scheme for the Modified Phase Field Crystal Equation.
SIAM J. Numer. Anal., 51(2013), 2851šC2873.


\bibitem{CH58}
J.W. Cahn, J.E. Hilliard. Free energy of a nonuniform system. I.
Interfacial energy free energy, J. Chem. Phys. 28 (1958) 258--267.


\bibitem{CS98}
L.Q. Chen, J. Shen.
Applications of semi-implicit Fourier-spectral method to phase field equations.
Comput. Phys. Comm., 108 (1998), pp. 147--158.


\bibitem{CJPWW14}
A. Christlieb, J. Jones, K. Promislow, B. Wetton, M. Willoughby.
High accuracy solutions to energy gradient flows from material science models.
J. Comput. Phys. 257 (2014), part A, 193--215.


\bibitem{ES93}
C.M. Elliott and A.M. Stuart.
The global dynamics of discrete semilinear parabolic equations.
SIAM J. Numer. Anal., 30 (1993), pp. 1622--1663.


\bibitem{Eyre98b}
D. J. Eyre.
Unconditionally gradient stable time marching the Cahn-Hilliard equation.
Computational and mathematical models of microstructural evolution
 (San Francisco, CA, 1998), 39--46,
Mater. Res. Soc. Sympos. Proc., 529, MRS, Warrendale, PA, 1998.


\bibitem{FTY15}
X. Feng, T. Tang and J. Yang.
Long time numerical simulations for phase-field problems using p-adaptive spectral deferred correction methods.
SIAM J. Sci. Comput. 37 (2015), no. 1, A271--A294.

\bibitem{GHu11}
H. Gomez and T.J.R. Hughes.
Provably unconditionally stable, second-order time-accurate,
mixed variational methods for phase-field models.
J. Comput. Phys., 230 (2011), pp. 5310-5327



\bibitem{Guan14b}
Z. Guan, C. Wang and S. M. Wise, A convergent convex splitting scheme for the periodic nonlocal Cahn-Hilliard equation, Numer. Math., 128 (2014), 377--406.

\bibitem{Guan16}
J. Guo, C. Wang, S. M. Wise and X. Yue, An $H^2$ convergence of a second-order convex-splitting, finite difference scheme for the three-dimensional Cahn-Hilliard equation, Commu. Math. Sci., 14 (2016), 489--515

\bibitem{HLT07}
Y. He, Y. Liu and T. Tang. On large time-stepping methods for the
Cahn-Hilliard equation.
Appl. Numer. Math., 57 (2007), 616--628.


\bibitem{LShen13}
F. Liu and J. Shen.
Stabilized semi-implicit spectral deferred correction methods for Allen-Cahn and Cahn-Hilliard equations.
Math. Methods Appl. Sci. 38 (2015), no. 18, 4564--4575.


\bibitem{LQT16}
D. Li, Z. Qiao and T. Tang, Characterizing the stabilization size for semi-implicit Fourier-spectral method to phase field equations,
 SIAM J. Numer. Anal., 54 (2016), 1653--1681

\bibitem{LQT17}
D. Li, Z. Qiao and T. Tang.
 Gradient bounds for a thin film epitaxy equation.
 J. Differential Equations 262 (2017), no. 3, 1720--1746.

\bibitem{LQ17}
D. Li and Z. Qiao, On second order semi-implicit Fourier spectral methods for 2D Cahn-Hilliard equations, J. Sci. Comput., 70 (2017), 301--341.

\bibitem{LQ173d}
D. Li and Z. Qiao,
On the stabilization size of semi-implicit Fourier-spectral methods for 3D Cahn-Hilliard equations.
Commun. Math. Sci. 15 (2017), no. 6, 1489--1506.

\bibitem{lt2021}
D. Li, T. Tang. Stability of the Semi-Implicit Method for the Cahn-Hilliard Equation with Logarithmic Potentials. {\em Ann. Appl. Math.}, 37 (2021), 31--60.


\bibitem{SB11}
C.B. Sch\"{o}nlieb and A. Bertozzi.
Unconditionally stable schemes for higher order inpainting. Commun. Math. Sci. 9 (2011), no. 2, 413--457.


\bibitem{SY10}
J. Shen and X. Yang. Numerical approximations
of Allen-Cahn and Cahn-Hilliard equations.
Discrete Contin. Dyn. Syst. A, 28 (2010),
1669--1691.

\bibitem{SXY19}
J. Shen, J. Xu and J. Yang.
A new class of efficient and robust energy stable schemes for gradient flows.
SIAM Rev. 61 (2019), no. 3, 474--506.

\bibitem{SShu17}
H. Song and C. Shu.
Unconditional energy stability analysis of a second order implicit-explicit local discontinuous Galerkin method for the Cahn-Hilliard equation.
J. Sci. Comput. 73 (2017), no. 2-3, 1178--1203.

\bibitem{Tang15}
Y. Cheng, A. Kurganov, Z. Qu, and T. Tang.
Fast and stable explicit operator splitting methods for phase-field
  models.
Journal of Computational Physics, 303:45--65, 2015.


\bibitem{Red19}
H.H. Gidey and B.D. Reddy. Operator-splitting methods for the 2D convective Cahn-Hilliard equation. Computers \& Mathematics with Applications, 2019, 77(12): 3128--3153.


\bibitem{Feng19}
Z. Weng, S. Zhai and X. Feng.
Analysis of the operator splitting scheme for the Cahn-Hilliard equation with a viscosity term.
Numer. Meth. for Partial Differential Equations. 35 (2019), no. 6, 1949--1970.


\bibitem{XT06}
C. Xu and T. Tang.
Stability analysis of large time-stepping methods for epitaxial growth models.
SIAM J. Numer. Anal. 44 (2006), no. 4, 1759--1779.




\bibitem{ZCST99}
    J. Zhu, L.-Q. Chen, J. Shen, and V. Tikare.
    Coarsening kinetics from a variable-mobility
    Cahn-Hilliard equation: Application of a semi-implicit
    Fourier spectral method, Phys. Rev. E (3), 60 (1999), pp. 3564--3572.




%
%









\end{thebibliography}

\end{document}